\newtheorem{theorem}{Theorem}
\newtheorem{propos}{Proposition}
\theoremstyle{definition}
\newtheorem{definition}{Definition}
\newtheorem{example}{Example}
\newtheorem{remark}{Remark} 
\newcommand{\RR}{\mathbb{R}} 
\newcommand{\NN}{\mathbb{N}}
\newcommand{\BB}{\mathbb{B}}
\newcommand{\dd}{\,{\rm d}}
\DeclareMathOperator{\infill}{\rm in-fill}
\DeclareMathOperator{\clos}{clos} 
\DeclareMathOperator{\Int}{int}
\DeclareMathOperator{\Meas}{\sf Meas}
\DeclareMathOperator{\har}{\sf har}
\DeclareMathOperator{\comp}{cmp}
\DeclareMathOperator{\sbh}{\sf sbh}
\DeclareMathOperator{\supp}{supp}
\DeclareMathOperator{\sgn}{sgn}
\DeclareMathOperator{\Borel}{\sf Bor}
\DeclareMathOperator{\Conn}{Conn}
\begin{document}

\title{Balayage of Measures with respect to
Classes\\ of Subharmonic 
and Harmonic Functions}

\author{B.\,N. Khabibullin}





\maketitle

\begin{abstract}
We investigate some properties of balayage, or, sweeping (out), of measures with respect to subclasses of subharmonic functions.  The following issues are considered: relationships between balayage of measures with respect to classes of harmonic or subharmonic functions and  balayage of measures with respect to significantly smaller classes of specific classes of functions; integration of measures and balayage of measures; sensitivity of balayage  of measures to polar sets, etc.
\end{abstract}

\section{Introduction}


The origins of the concept of {\it balayage,\/} or,  ``sweeping (out)'' etc., of measures or functions are the studies of Henri Poincar\'e, de la Vall\'ee Poussin, Henri Cartan, Marcel Brelot and many others. 
A detailed historical review of potential theory is given in  in  \cite{Brelot10}. 
In \cite{KhaRozKha19}, we investigate  various general concepts of  balayage. In this article we deal with particular cases of such balayage with respect to special  classes of  subharmonic  functions. 

The general concept of balayage can be defined as follows. 
Let $R$ be a   (pre-)ordered set with a (pre-)order relation $\leq$.
Let $L$ be a set with a subset $H\subset L$. 
A function $\omega\colon L\to R$ can be called the {\it  balayage} of a function 
 $\delta \colon L\to R$ {\it  with respect to\/} $H$, and we write 
$\delta\preceq_H  \omega$,  if the function $\omega$ majorizes the function $\delta$ on $H$:
\begin{equation}\label{b0}
\delta(h)\leq \omega(h) \quad \text{for each $h\in  H\subset L$} .
\end{equation}
In this article, $R$ is the extended real line,  $L$ is the class of all upper semicontinuous functions  on an open set $O$ in a finite-dimensional Euclidean space, $H$ is a subclass of subharmonic functions on $O$, and 
$\delta$ and $\omega$ is a pair of Radon positive measures on $O$ with compact supports in $O$. In this case, relationship \eqref{b0} turns into inequalities of the form 
\begin{equation}\label{b0mu}
\delta(h):=\int_O h \dd \delta \leq \int_O h \dd \omega=:\omega(h) \quad \text{for each  $h\in H\subset L$}. 
\end{equation}

We investigate properties of balayage of  measures with respect to classes of harmonic, subharmonic, and special subharmonic functions. 


We proceed to precise and detailed definitions and formulations.

\section{Definitions, notations and conventions}\label{Ss12}

The reader can skip this Section \ref{Ss12}
and return to it only if necessary.

 We denote by $\NN:=\{1,2,\dots\}$, $\RR$, and $\RR^+:=\{x\in \RR\colon x\geq 0\}$  the sets of {\it natural,\/} of {\it real,\/} and  of {\it positive\/} numbers, each endowed with its natural order ($\leq$, $\sup/\inf$), algebraic, geometric  and topological structure.  We denote singleton sets by a symbol without curly brackets. So, $\NN_0:=\{0\}\cup \NN=:0\cup \NN$, and  $\RR^+\!\setminus\!0:=\RR^+\!\setminus\!\{0\}$ is the set of {\it strictly positive\/} numbers, etc. 

The {\it extended real line\/}  $\overline \RR:=-\infty\sqcup\RR\sqcup+\infty$ is the order completion of $\RR$ by the  {\it disjoint union\/} $\sqcup$  with $+\infty:=\sup \RR$ and $-\infty:=\inf \RR$ equipped with the order topology with two  ends $\pm\infty$, $\overline \RR^+:=\RR^+\sqcup+\infty$;  $\inf \varnothing :=+\infty$, $\sup \varnothing :=-\infty$ for the {\it empty set\/} $\varnothing$ etc. 
The same symbol $0$ is also used, depending on the context, to denote  zero vector, zero function, zero measure, etc.

We denote by $\RR^{\tt d}$ the  {\it Euclidean space of ${\tt d}\in \NN$ dimensions\/}  with the  {\it Euclidean norm\/} $|x|:=\sqrt{x_1^2+\dots+x_{\tt d}^2}$ of $x=(x_1,\dots ,x_{\tt d})\in \RR^{\tt d}$.

We denote by $\RR^{\tt d}_{\infty}:=\RR^{\tt d}\sqcup\infty$  the {\it  Alexandroff\/}  {\it one-point compactification\/}   of $\RR^{\tt d}$
obtained by adding one extra point $\infty$. For a subset $S\subset \RR^{\tt d}_{\infty}$ or a subset $S\subset \RR^{\tt d}$ we let $\complement S :=\RR^{\tt d}_{\infty}\!\setminus\!S$, $\clos S$, $\Int S:=\complement (\clos \complement  S)$, and $\partial S:=\clos S\!\setminus\!\Int S$ denote its
 {\it complement,\/} {\it closure,} {\it interior,} and {\it boundary\/}  always in $\RR^{\tt d}_{\infty}$, and $S$ is equipped with the topology induced from $\RR^{\tt d}_{\infty}$. If $S'$ is a relative compact subset in $S$, i.e., $\clos S'\subset S$,  then we write $S'\Subset S$.  We denote by 
 $B(x,t):=\{y\in \RR^{\tt d}\colon |y-x|< t\}$, $\overline B(x,t):=\{y\in \RR^{\tt d}\colon |y-x|\leq  t\}$, $\partial \overline B(x,t):=\overline B(x,t)\!\setminus\!  B(x,t)$  an {\it open ball, closed ball,\/} a {\it circle of radius $t\in \RR^+$ centered at $x\in \RR^{\tt d}$}, respectively. Besides, we denote by  $\BB:=B(0,1)$, $\overline \BB:=\overline B(0,1)$ and $\partial \BB:=\partial \overline B(0,1)$  the {\it open unit ball,\/} the {\it closed unit ball\/} and the {\it unit sphere\/} in $\RR^{\tt d}$, respectively.

\underline{Throughout this paper} $O\neq \varnothing$ will denote  an  {\it open subset  in\/ $\RR^{\tt d}$},   and $D\neq \varnothing$ is a  {\it domain in\/ $\RR^{\tt d}$,\/}  i.e., an open connected subset in $\RR^{\tt d}$. 

For $S\subset \RR_{\infty}^{\tt d}$, $C(S)$  is the vector space over $\RR$ of all {\it continuous\/}  functions $f\colon S\to  \RR$ with the $\sup$-norm, 
$C_0(S)\subset C(S)$ is the subspace of functions $f\in C(S)$ {\it with
compact support\/} $\supp f\Subset S$,  
and  $\text{\sf usc}(S)$ is the convex cone over $\RR^+$ of all {\it upper semicontinuous\/} functions $f\colon S\to \RR\cup -\infty=\overline\RR\!\setminus\!+\infty$. For $S\subset \RR^{\tt d}$, 
$\har(S)$ and  $\sbh(S)$ are the collections   of  all functions $u$ which are harmonic and subharmonic on some open set $O_u\supset S$, respectively. 
In addition, $\sbh_*(S)\subset \sbh(S)$ consists only of functions $u\in \sbh(S)$ such that $u\not\equiv -\infty$ on each connected component of $O_u$.

The convex cone over $\RR^+$ of all Borel, or Radon,  positive measures $\mu\geq 0$  on the $\sigma$-algebra $\Borel (S)$ of all {\it Borel subsets\/} of $S$ is denoted by $\Meas^+(S)$; $\Meas^+_{\comp}(S)\subset \Meas^+(S)$ is the subcone of measures $\mu\in \Meas^+(S)$ with compact  {\it support\/} $\supp \mu$ in $S$, $\Meas^{+1}(S)
$ is the convex set of {\it probability\/} measures on $S$, 
$\Meas_{\comp}^{1+}(S):=\Meas^{1+}(S)\cap \Meas_{\comp}(S)$.
So, $\delta_x \in \Meas_{\comp}^{1+} (S)$
is the {\it Dirac measure\/} at a point $x \in S$, i.e., $\supp \delta_x = \{x\}$, $\delta_x (\{x\}) = 1$. 

We denote by  $\mu\bigm|_{S'}$
the restriction of $\mu$ to  $S'\in {\Borel} (S)$. The same notation is used for the restrictions of functions and their classes to sets.


Let ${\bigtriangleup}$  be the {\it Laplace operator\/}  acting in the sense of the
theory of distributions, $\Gamma$ be the \textit{gamma function}.
For $u\in \sbh_*(O)$, the {\it  Riesz measure of\/} $u$ is a  Borel 
(or Radon \cite[A.3]{R}) \textit{ positive measure }
\begin{equation}\label{df:cm}
\varDelta_u:= c_d {\bigtriangleup}  u\in \Meas^+(  O),  \quad 
c_d:=\frac{\Gamma(d/2)}{2\pi^{d/2}\max \{1, d-2\bigr\}}.
\end{equation}

\section{Inward filling of subsets in an  open set}\label{hullin} 

Let $O$ be a topological space, and $S\subset O$.  
We denote by $\Conn_O S$ the set of all connected components of $S$. 
We write $S\Subset O$, if the closure of $S$ in $O$ is a compact subset of $O$.  
\begin{definition}
\label{df:hole}
The union of a subset $S\subset O$ with all 
connected component of $C\in \Conn_O( O\!\setminus\!S)$ such that $C\Subset O$
 will be called the  \textit{inward filling\/} of $S$ with respect to $O$ and is denoted further as 
\begin{equation*}
\infill_O S:=S\bigcup \Bigl(\bigcup \bigl\{C\in \Conn_O (O\!\setminus\!S) \colon C\Subset O\bigr\}\Bigr). 
\end{equation*}
Denote by $O_{\infty}$  the {\it  Alexandroff one-point compactification of\/} $O$ 
with underlying set $O \sqcup \{\infty\}$.
\end{definition} 

\begin{propos}[{\rm \cite[6.3]{Gardiner}, \cite{GautherB}}]\label{KOc}
Let  $S$ be a compact  set in an open set  $O \subset \RR^{\tt d}$.  Then 
\begin{enumerate}[{\rm (i)}]
\item\label{Ki} $\infill_{O} S$ is a compact subset in $O$, and 
$\infill_{O}\bigl(\infill_{O}  S\bigr)=\infill_{O}  S$;
\item\label{Kii} the set\/ $O_{\infty} \!\setminus\!\infill_{O}  S$ is connected and locally connected; 
\item\label{Kiii}   the inward filling of $S$ with respect to $O$ coincides with the complement in $O_{\infty}$ of  connected component of $O_{\infty}\!\setminus\!S$ containing the point $\infty$;

\item\label{Kiiv} if $O'\subset \RR_{\infty}^{\tt d}$ is an open subset and 
 $O\subset  O'$, then  $\infill_{O}  S\subset \infill_{O'}  S$;
\item\label{Kiv} $\RR^{\tt d}\!\setminus\!\infill_{O}  S$ has only finitely many components, i.\,e., $\#\Conn_{\RR^{\tt d}_{\infty}}(\RR^{\tt d}\!\setminus\!\infill_{O}  S)<\infty$.
\end{enumerate}
\end{propos}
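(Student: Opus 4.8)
The plan is to organize the argument around claim \eqref{Kiii}, which identifies $\infill_O S$ topologically, and derive everything else from it. First I would fix notation: let $K:=\infill_O S$, let $U$ denote the connected component of $O_\infty\!\setminus\! S$ that contains $\infty$ (this makes sense because $S$, being compact in $O$, is a compact subset of $O_\infty$ not containing $\infty$, so $\infty$ lies in the open set $O_\infty\!\setminus\! S$), and observe that the connected components of $O_\infty\!\setminus\! S$ are exactly the sets in $\Conn_O(O\!\setminus\! S)$ together with $U$, because the only component that can ``see'' the added point $\infty$ is $U$ itself, and a component $C\in\Conn_O(O\!\setminus\! S)$ satisfies $C\Subset O$ precisely when its closure in $O_\infty$ misses $\infty$, i.e. precisely when $C$ is not the component $U\setminus\{\infty\}$. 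This yields \eqref{Kiii}: $K=O_\infty\!\setminus\! U$. Since $U$ is open in the compact Hausdorff space $O_\infty$ and $\infty\in U$, the set $K=O_\infty\setminus U$ is a closed subset of $O_\infty$ not containing $\infty$, hence a compact subset of $O$; and $O\!\setminus\! K$ is the union of all the non-relatively-compact components, so running the operation again changes nothing, giving \eqref{Ki}.

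For \eqref{Kii} I would use that $O_\infty$ is locally connected (it is a one-point compactification of a locally connected, locally compact Hausdorff space — here $O$ is open in $\RR^{\tt d}$, hence locally connected, and the standard neighborhoods of $\infty$ are connected because $O$ has only finitely many ``ends'' near each compact set; more carefully one checks local connectedness of $O_\infty$ at $\infty$ using that complements of large compact sets in a Euclidean open set have neighborhoods of $\infty$ that are connected). Then $O_\infty\!\setminus\! \infill_O S = U$ is a connected component of the open set $O_\infty\!\setminus\! S$ in a locally connected space, hence itself open and connected; and open subsets of a locally connected space are locally connected, so $U$ is locally connected as well. Part \eqref{Kiiv} is the monotonicity of the filling: if $O\subset O'$ then a component $C\in\Conn_{O'}(O'\!\setminus\! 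S)$ with $C\Subset O'$ need not be the one I want, so instead I argue through \eqref{Kiii}: the component of $O'_\infty\!\setminus\! S$ containing $\infty'$ (the point at infinity of $O'$) restricts, after removing the part outside $O$, to a subset of the component of $O_\infty\!\setminus\! S$ containing $\infty$; equivalently, any relatively-compact-in-$O$ component of $O\!\setminus\! S$ is contained in a relatively-compact-in-$O'$ component of $O'\!\setminus\! S$, which is the desired inclusion $\infill_O S\subset\infill_{O'} S$.

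For \eqref{Kiv} I would apply \eqref{Kiiv} with $O'=\RR^{\tt d}_\infty$ to get $\infill_O S\subset \infill_{\RR^{\tt d}_\infty} S$, reducing to: a compact set $K$ in $\RR^{\tt d}$ that equals its own inward filling (with respect to $O$, but then also sitting inside $\RR^{\tt d}$) has complement in $\RR^{\tt d}_\infty$ with finitely many components. Here the key input is that $K$ is compact, so $K\subset \overline B(0,r)$ for some $r$; the unbounded component of $\RR^{\tt d}\!\setminus\! K$ contains $\RR^{\tt d}\!\setminus\!\overline B(0,r)$, and each bounded component meets the compact shell $\overline B(0,r)\!\setminus\! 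K$ — but that by itself does not bound the count. The honest route is to invoke that $\infill_O S$ is compact \emph{and} that $O_\infty\!\setminus\!\infill_O S$ is connected and locally connected (from \eqref{Kii}), which forces $\RR^{\tt d}_\infty\!\setminus\!\infill_O S$ to have finitely many components: a locally connected compact (or locally compact) space has only finitely many components that meet any given compact set, and here one shows there are no ``small'' components accumulating, because such accumulation would violate local connectedness of the complement at the accumulation point. I expect this last point — ruling out infinitely many components of $\RR^{\tt d}\!\setminus\!\infill_O S$ — to be the main obstacle, and the clean way to handle it is exactly to quote the cited references \cite[6.3]{Gardiner}, \cite{GautherB} for \eqref{Kii}, since local connectedness of the complement is doing the real work and is most naturally established there.
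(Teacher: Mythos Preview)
The paper does not prove this proposition at all: it is stated with the attribution \cite[6.3]{Gardiner}, \cite{GautherB} and immediately followed by Proposition~\ref{pr2}, with no argument in between. So there is no ``paper's own proof'' to compare your proposal against; the authors simply import the result from the cited references.

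As for your sketch itself: the reduction to \eqref{Kiii} is the right organizing idea, and your derivation of \eqref{Ki} from it is clean. Two points would need real work if you wanted a self-contained proof. First, your argument for \eqref{Kii} leans on $O_\infty$ being locally connected, and the parenthetical justification (``$O$ has only finitely many `ends' near each compact set'') is exactly what needs proving --- you are effectively using that $\infill_O K$ is compact for compact $K$ to get connected neighbourhoods of $\infty$, which risks circularity with \eqref{Ki}. This can be untangled (prove \eqref{Ki} first directly, then use it to build connected neighbourhoods of $\infty$), but it is not free. Second, for \eqref{Kiv} you essentially concede the point and defer to the references; the ``no accumulation of small components'' heuristic is suggestive but not a proof, and this is indeed where the substance lies. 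Since the paper itself simply cites \cite{Gardiner} and \cite{GautherB}, your instinct to do the same is in line with how the proposition is handled here.
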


\begin{propos}[{\cite[Theorem 1.7]{Gardiner}}]\label{pr2}
Let $O$ be an open set in $\RR^{\tt d}$, let $S$ be a compact subset in $O$, and suppose that $O_{\infty}\!\setminus\!S$ is connected. Then, for each $u\in \har (S)$ and each number $b\in \RR^+\!\setminus\!0$, there is $h\in \har(O)$ such that $|u-h|<b$ on $S$.  
\end{propos}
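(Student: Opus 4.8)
The assertion is equivalent to saying that $u|_S$ belongs to the closed linear subspace $\mathcal{A}:=\overline{\{h|_S:h\in\har(O)\}}$ of $C(S)$, and the plan is to prove this by the harmonic analogue of the classical proof of Runge's theorem. First I would reduce to approximating fundamental solutions with poles in $O\setminus S$. Let $u$ be harmonic on an open set $O_u\supset S$; choose $V$ with $S\Subset V\Subset O_u\cap O$ and a $C^\infty$ function $\varphi$ on $\RR^{\tt d}$ with $\supp\varphi\Subset O_u\cap O$ and $\varphi\equiv 1$ on $V$, and put $v:=\varphi u$ (extended by zero to $\RR^{\tt d}$) and $\psi:=\bigtriangleup v$. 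Since $\varphi\equiv 1$ near $S$ while $u$ is harmonic there, $K:=\supp\psi$ is a compact subset of $O\setminus S$. With $\Phi$ the fundamental solution of $\bigtriangleup$ (so that $\bigtriangleup\Phi=\delta_0$) one has $v=\Phi*\psi$ (the difference is an entire harmonic function that vanishes at infinity — directly for ${\tt d}\ge 3$, and for ${\tt d}=2$ because $\int\psi=0$ — hence is identically $0$), so $u(x)=\int_K\Phi(x-y)\psi(y)\dd y$ for $x\in S$. As $\operatorname{dist}(S,K)>0$ and $\Phi$ is continuous off the origin, a Riemann-sum approximation of this integral gives: for every $\varepsilon>0$ there are finitely many $y_j\in O\setminus S$ and $c_j\in\RR$ with $\bigl|u(x)-\sum_j c_j\Phi(x-y_j)\bigr|<\varepsilon$ on $S$. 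Since $\mathcal{A}$ is a closed subspace, it now suffices to show $\Phi(\,\cdot-y)|_S\in\mathcal{A}$ for every $y\in O\setminus S$.

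The heart of the proof is then a pole-pushing argument. For $z\in\RR^{\tt d}\setminus S$ write $\Phi_z:=\Phi(\,\cdot-z)$ and consider
\[
E:=\bigl\{\,z\in\RR^{\tt d}\setminus S:\ \partial^\alpha\Phi_z|_S\in\mathcal{A}\ \text{for every multi-index }\alpha\,\bigr\}.
\]
(a)~$E$ is open in $\RR^{\tt d}\setminus S$: if $z_0\in E$ and $|z-z_0|<\tfrac12\operatorname{dist}(z_0,S)$, then for each $\alpha$ the Taylor expansion of the real-analytic function $\partial^\alpha\Phi$ about $x-z_0$, namely $\partial^\alpha\Phi_z(x)=\sum_\beta\tfrac{(z_0-z)^\beta}{\beta!}\bigl(\partial^{\alpha+\beta}\Phi_{z_0}\bigr)(x)$, converges uniformly for $x\in S$; each term lies in $\mathcal{A}$ and $\mathcal{A}$ is closed, so $\partial^\alpha\Phi_z|_S\in\mathcal{A}$. (b)~$E$ is closed in $\RR^{\tt d}\setminus S$, by joint continuity of $(x,z)\mapsto\partial^\alpha\Phi(x-z)$ off the diagonal together with closedness of $\mathcal{A}$. (c)~$E$ is large: every harmonic polynomial lies in $\har(\RR^{\tt d})\subset\har(O)\subset\mathcal{A}$, so the multipole (Gegenbauer) expansions of $\Phi_z$ and of its derivatives show $\{z:|z|>\sup_{x\in S}|x|\}\subset E$; and if $z\in\RR^{\tt d}\setminus O$ then $\Phi_z$ and all its derivatives are harmonic on $\RR^{\tt d}\setminus\{z\}\supset O$, so $z\in E$. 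By (a) and (b), $E$ is a union of connected components of $\RR^{\tt d}\setminus S$.

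Now let $U$ be a connected component of $O\setminus S$; it remains to see $U\subset E$. By Proposition~\ref{KOc} the hypothesis that $O_\infty\setminus S$ is connected is equivalent to $\infill_O S=S$, i.e.\ to the property that no component of $O\setminus S$ is relatively compact in $O$. Hence $U$ lies in a connected component $W$ of $\RR^{\tt d}_\infty\setminus S$ with $W\not\subset O$: otherwise $W\subset O$ forces $W=U$, and then $\clos W\subset W\cup S\subset O$ is a compact subset of $O$, so $U\Subset O$, contradicting $\infill_O S=S$. Pick $q\in W\setminus O$. If $q$ is finite, then $q\in\RR^{\tt d}\setminus O\subset E$ by (c), and $U$ and $q$ lie in a common component of $\RR^{\tt d}\setminus S$ (for ${\tt d}\ge 2$, deleting the single point $\infty$ does not disconnect a domain of $\RR^{\tt d}_\infty$), which is therefore contained in $E$; if $q=\infty$, then $W\cap\RR^{\tt d}$ is the unbounded component of $\RR^{\tt d}\setminus S$, which lies in $E$ by (c). In either case $U\subset E$; thus each $y_j\in E$, so $\Phi(\,\cdot-y_j)|_S\in\mathcal{A}$ and finally $u|_S\in\mathcal{A}$, which is the claim. (The case ${\tt d}=1$ is trivial, since then $\har(O)|_S=\har(S)|_S$.)

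I expect the main obstacle to be the openness of $E$ in the pole-pushing step: tracking only $\Phi_z$ would not transmit approximability to nearby poles, because uniform convergence on $S$ controls no derivatives on $S$; carrying all the derivatives $\partial^\alpha\Phi_z$ in the definition of $E$, together with the observation that the pole-Taylor coefficients of $\partial^\alpha\Phi$ are themselves the derivatives $\partial^{\alpha+\beta}\Phi_{z_0}$, is exactly what makes the induction close up. A secondary, more routine, point is the bookkeeping at the point at infinity, handled here by the multipole expansion and — in dimension ${\tt d}=2$ — by the identity $\int\psi=0$.
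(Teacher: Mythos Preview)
The paper does not actually prove this proposition: it is quoted verbatim as Theorem~1.7 of Gardiner's monograph and used as a black box (to derive Proposition~\ref{pr3}). So there is no ``paper's own proof'' to compare against.

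That said, your argument is correct and is essentially the standard Runge-type proof one finds in Gardiner and in Armitage--Gardiner: the cutoff representation $u=\Phi*\psi$ on $S$ with $\supp\psi\subset O\setminus S$, the Riemann-sum reduction to finitely many translated fundamental solutions, and the pole-pushing via an open--closed set $E$. Two points are worth flagging as the places where a careless write-up would fail, and you handled both correctly. First, carrying \emph{all} derivatives $\partial^\alpha\Phi_z$ in the definition of $E$ is exactly what is needed to make the openness step close up, since the Taylor coefficients of $\partial^\alpha\Phi$ at $x-z_0$ are precisely $\partial^{\alpha+\beta}\Phi_{z_0}(x)$; tracking only $\Phi_z$ would not propagate membership in $\mathcal A$ to nearby poles. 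The uniform convergence on $S$ that you invoke is justified because $\partial^\alpha\Phi$ is harmonic on $\RR^{\tt d}\setminus\{0\}$, so its Taylor series at $x-z_0$ converges on the full ball $B(x-z_0,|x-z_0|)$ with the usual harmonic-function bounds, and $|x-z_0|\ge\operatorname{dist}(z_0,S)$ for $x\in S$. Second, the topological step---showing that every component of $O\setminus S$ meets $E$---is handled cleanly via $\infill_O S=S$ (equivalently, no component of $O\setminus S$ is relatively compact in $O$), and your treatment of the point at infinity for ${\tt d}\ge2$ is fine since removing a single point does not disconnect an open connected subset of a ${\tt d}$-manifold. The ${\tt d}=2$ observation $\int\psi=\int\bigtriangleup v=0$ (divergence theorem, $v$ compactly supported) is the right way to kill the logarithmic growth of $\Phi*\psi$ at infinity.
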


\begin{propos}\label{pr3}
Let $O$ be an open set in $\RR^{\tt d}$, and let $S$ be a compact subset in $O$. If $h\in \har\bigl(\infill_O S\bigr)$, then 
 there are harmonic functions $ h_j\underset{\text{\tiny $j\in \NN$}}{\in} \har (O)$  such that the sequence  $(h_j)_{j\in \NN}$ converges to this harmonic  
function $h$ in $C (\infill_O S)$.
\end{propos}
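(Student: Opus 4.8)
The plan is to reduce Proposition~\ref{pr3} to the Runge-type approximation statement of Proposition~\ref{pr2}, using the structural facts about inward fillings collected in Proposition~\ref{KOc}. Set $K:=\infill_O S$. By Proposition~\ref{KOc}\eqref{Ki}, $K$ is a compact subset of $O$ with $\infill_O K = K$, and by Proposition~\ref{KOc}\eqref{Kiii} the complement $O_\infty\setminus K$ is precisely the connected component of $O_\infty\setminus S$ containing $\infty$; in particular $O_\infty\setminus K$ is connected. Thus $K$ satisfies the hypotheses of Proposition~\ref{pr2} (it is compact in $O$ and $O_\infty\setminus K$ is connected). The given function $h$ lies in $\har(K)$, so for every $j\in\NN$ Proposition~\ref{pr2}, applied with $b=1/j$, furnishes $h_j\in\har(O)$ with $|h-h_j|<1/j$ on $K$. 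Then $(h_j)_{j\in\NN}$ converges to $h$ uniformly on $K=\infill_O S$, i.e.\ in $C(\infill_O S)$, which is exactly the assertion.

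The one point that needs a small argument is that $h\in\har\bigl(\infill_O S\bigr)$ is indeed an admissible input for Proposition~\ref{pr2} with the compact set $K$: by definition $h$ is harmonic on some open set $O_h\supset K=\infill_O S$, and harmonicity on an open neighbourhood of $K$ is precisely membership in $\har(K)$, so no regularization near $\partial K$ is needed. A minor subtlety is that Proposition~\ref{pr2} is stated for a general open set, so one should apply it with the same ambient $O$ in which the inward filling was taken; since $K\subset O$ and $O_\infty\setminus K$ is connected, the hypotheses match verbatim. It is also worth noting explicitly that the approximating functions $h_j$ are genuinely harmonic on all of $O$ (not merely on a neighbourhood of $K$), which is what the statement claims and what Proposition~\ref{pr2} delivers.

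I expect no serious obstacle here: the whole content of Proposition~\ref{pr3} is that passing from $S$ to its inward filling $\infill_O S$ removes exactly the ``holes'' of $S$ relative to $O$, thereby making $O_\infty\setminus(\infill_O S)$ connected, at which point the classical Runge/Walsh-type theorem in the form of Proposition~\ref{pr2} applies directly. The only thing to be careful about is invoking Proposition~\ref{KOc}\eqref{Ki} and \eqref{Kiii} in the right order: first use \eqref{Ki} to know $K$ is compact in $O$, then use \eqref{Kiii} (equivalently the idempotence $\infill_O K=K$ together with Proposition~\ref{KOc}\eqref{Kii}) to conclude connectedness of $O_\infty\setminus K$. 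After that the extraction of the sequence $(h_j)$ by taking $b=1/j$ is routine.
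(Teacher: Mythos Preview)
Your proposal is correct and follows essentially the same approach as the paper: the paper explicitly states that Proposition~\ref{pr3} is the intersection of Proposition~\ref{KOc} (parts \eqref{Ki}--\eqref{Kii}) and Proposition~\ref{pr2}, applied with $\infill_O S$ in place of $S$. The only cosmetic difference is that you first cite part~\eqref{Kiii} for connectedness of $O_\infty\setminus K$ before noting that \eqref{Kii} would do the job directly, whereas the paper simply invokes \eqref{Kii}.
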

 Proposition \ref{pr3} is the intersection of Proposition \ref{KOc} (parts \eqref{Ki}--\eqref{Kii}) and Proposition \ref{pr2} if we consider $\infill_O S$ instead of $S$ in Proposition \ref{pr2}.

\begin{propos}[{\cite[Theorem 6.1]{Gardiner},
\cite[Theorem 1]{GautherC}, \cite[Theorem 16]{GautherB}}]\label{prs}
Let $O$ be an open set in $\RR^{\tt d}$,  let $S$ be a closed subset in $O$, and suppose that  $O_{\infty}\!\setminus\!S$ is connected and locally connected. Then, for each $u\in \sbh (S)$,  there exists $U\in \sbh(O)$ such that $u=U$ on $S$.  
\end{propos}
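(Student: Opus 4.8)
\medskip
\noindent\emph{Proof idea.}
We may assume $S\ne\varnothing$. Since $u\in\sbh(S)$, after replacing the domain of definition of $u$ by its intersection with $O$ we may assume $u\in\sbh(G)$ for an open $G$ with $S\subset G\subset O$; discarding those components of $G$ on which $u\equiv-\infty$ (there we simply set $U:=-\infty$), we may assume $u\in\sbh_*(G)$ with Riesz measure $\mu:=\varDelta_u\in\Meas^+(G)$. The plan has two steps: (a) peel off the Riesz mass of $u$ near $S$, reducing the problem to the extension of a function \emph{harmonic} near $S$; (b) perform that extension by a cut-off together with a subharmonic corrector.

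\emph{Step (a).} Fix any open $W$ with $S\subset W$ and $\clos W\cap O\subset G$. Then $\nu:=\mu\bigm|_W$ is locally finite on $O$ --- each $x\in O$ has a neighbourhood $N\Subset O$ for which $N\cap\clos W$ is a compact subset of $G$, whence $\nu(N)<\infty$ --- so $\nu\in\Meas^+(O)$. A standard Mittag--Leffler construction produces $P\in\sbh_*(O)$ with $\varDelta_P=\nu$ on $O$: take a compact exhaustion $(K_n)$ of $O$, which by Proposition~\ref{KOc}\,\eqref{Ki} may be chosen with $K_n=\infill_O K_n$, so that $O_\infty\!\setminus\!K_n$ is connected by Proposition~\ref{KOc}\,\eqref{Kii}; let $p_n$ be the Newtonian (for $\tt d\ge3$) or logarithmic (for $\tt d\le2$) potential of the compactly supported measure $\nu\bigm|_{K_{n+1}}$, note that $p_{n+1}-p_n$ is harmonic on $\Int K_{n+1}$, and use Proposition~\ref{pr2} on $K_{n-1}\subset\Int K_{n+1}$ to correct the $p_n$ by functions harmonic on $O$ with geometrically small errors; the corrected sequence converges uniformly on compacta to the desired $P$. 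Since $\varDelta_P=\mu=\varDelta_u$ on $W$, the difference $u-P$ is distributionally harmonic on $W$, and as two subharmonic functions with equal Riesz measures that agree almost everywhere agree everywhere, there is a function $h$ harmonic on $W$ with $u=P+h$ on $W$ pointwise. Thus $h\in\har(S)$, and it now suffices to find $\widetilde h\in\sbh(O)$ with $\widetilde h=h$ on a neighbourhood of $S$, for then $U:=P+\widetilde h\in\sbh(O)$ and $U=P+h=u$ on $S$.

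\emph{Step (b).} Pick $\chi\in C^\infty(O)$ with $0\le\chi\le1$, $\supp\chi\subset W$, and $\chi\equiv1$ on an open $W'$ with $S\subset W'$ and $\clos W'\cap O\subset W$. Extending $\chi h$ by $0$ to all of $O$ yields a $C^\infty$ function whose distributional Laplacian has smooth density supported in the ``collar'' $T:=\{0<\chi<1\}\subset W\!\setminus\!(\clos W'\cap O)$; write $\varDelta_{\chi h}\ge-\lambda$ for a $\lambda\in\Meas^+(O)$ supported in $\clos T\cap O$, so that $\lambda\bigm|_{W'}=0$. If one can produce a \emph{corrector} $\rho\in\sbh(O)$ with $\rho=0$ on $W'$ and $\varDelta_\rho\ge\lambda$ on $O$, then $\widetilde h:=\chi h+\rho$ is upper semicontinuous and locally integrable with $\varDelta_{\widetilde h}=\varDelta_{\chi h}+\varDelta_\rho\ge0$, hence $\widetilde h\in\sbh(O)$, while $\widetilde h=h+0=h$ on $W'\supset S$; so $U:=P+\widetilde h$ is the extension sought.

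\emph{The crux.} Everything thus reduces to constructing $\rho$, and this is where the hypotheses are genuinely used and where the difficulty lies. When $S$ is compact, $W'$ may be taken with compact closure in $O$, and $\rho$ is obtained by a further Mittag--Leffler construction as in Step~(a) (prescribe $\varDelta_\rho$ to vanish near $\clos W'\cap O$ and to dominate $\lambda$ beyond it, then remove the residual harmonic error on $W'$ by harmonic functions on $O$ via Proposition~\ref{pr2}), so in that case the statement already follows from the propositions in the excerpt. For general closed $S$ one needs the \emph{tangential}, Arakelian-type analogue of Proposition~\ref{pr2} --- uniform harmonic approximation on closed sets of the form $S\cup K$ with $K\Subset O$ --- together with a version of the inward-filling Proposition~\ref{KOc} for closed (not necessarily compact) sets; both parts of the hypothesis enter here. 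Connectedness of $O_\infty\!\setminus\!S$ gives $\infill_O S=S$ by Proposition~\ref{KOc}\,\eqref{Kiii}: $S$ bounds no relatively compact complementary component, hence no maximum principle forces a subharmonic function vanishing near $S$ to vanish on an enclosed region, which is precisely what makes a nonzero corrector $\rho$ possible. Local connectedness of $O_\infty\!\setminus\!S$ controls the ``ends'' of $S$ and provides a nested neighbourhood basis of $S$ whose complements in $O_\infty$ are connected and locally connected, so that the Mittag--Leffler iteration defining $\rho$ --- which must balance the error on the growing part of $S$ against the error on the exhausting compacts of $O$ --- actually converges. Carrying out this last iteration cleanly is the technical heart of the proof; Step~(a) and the assembly $U=P+\widetilde h$ are routine by comparison.
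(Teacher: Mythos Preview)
The paper does not prove this proposition at all: it is quoted verbatim from the cited references (Gardiner, Gauthier) and used as a black box to derive Proposition~\ref{prs+}. So there is no ``paper's own proof'' to compare against.

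As for your sketch: the overall architecture --- split off the Riesz mass to reduce to a harmonic-near-$S$ function, then cut off and add a subharmonic corrector --- is indeed the strategy in Gauthier's original argument, and you are right that the connectedness and local connectedness of $O_\infty\setminus S$ are exactly what make the Arakelian-type tangential approximation go through. But you have essentially written the table of contents of the proof rather than the proof. Your Step~(a) is fine. In Step~(b), however, the sentence ``write $\varDelta_{\chi h}\ge -\lambda$ for a $\lambda\in\Meas^+(O)$'' hides a real issue: $\bigtriangleup(\chi h)$ is smooth with support in the collar $T$, but $T$ is in general a non-compact closed set in $O$, so $\lambda$ need not be locally finite in any uniform way, and the requirement $\varDelta_\rho\ge\lambda$ with $\rho=0$ on $W'$ is not obviously achievable by the Mittag--Leffler scheme you describe; one needs a careful fusion-type construction (as in Gauthier's ``pole-pushing'' or Gardiner's localization) that simultaneously controls the corrector on an exhaustion of $S$ and on an exhaustion of $O$. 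You acknowledge this (``Carrying out this last iteration cleanly is the technical heart''), which is honest, but it means what you have submitted is an outline plus a correct diagnosis of where the work lies, not a proof. If you want to complete it, the cleanest route is to follow \cite[Theorem~1]{GautherC} directly rather than to reinvent the corrector.
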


The intersection of Proposition \ref{KOc} (parts \eqref{Ki}--\eqref{Kii}) and Proposition \ref{prs} is 
\begin{propos}\label{prs+}
Let $O$ be an open set in $\RR^{\tt d}$,  and let $S$ be a compact subset in $O$. Then, for each $u\in \sbh (\infill_O S)$,  there exists $U\in \sbh(O)$ such that 
$u=U$ on $\infill_O S$.  
\end{propos}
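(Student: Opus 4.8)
The plan is to invoke Proposition~\ref{prs} directly, but with the inward filling $\infill_O S$ playing the role of the closed set ``$S$'' in that statement. Concretely, I would argue as follows. Since $S$ is compact in $O$, part~\eqref{Ki} of Proposition~\ref{KOc} guarantees that $\infill_O S$ is again a compact subset of $O$; in particular it is closed in $O$, so it is an admissible choice of closed set for Proposition~\ref{prs}. Moreover, part~\eqref{Kii} of Proposition~\ref{KOc} asserts precisely that $O_{\infty}\setminus\infill_O S$ is connected and locally connected, which is the remaining hypothesis of Proposition~\ref{prs}.

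With both hypotheses verified, Proposition~\ref{prs} applies to the closed set $\infill_O S\subset O$: given any $u\in\sbh(\infill_O S)$, it yields $U\in\sbh(O)$ with $U=u$ on $\infill_O S$, which is exactly the assertion of Proposition~\ref{prs+}. As a consistency check, part~\eqref{Ki} also gives $\infill_O\bigl(\infill_O S\bigr)=\infill_O S$, so applying the inward-filling operation once more changes nothing and the reduction is clean; one could equally phrase the proof as ``replace $S$ by the idempotent set $\infill_O S$ and quote Proposition~\ref{prs}''.

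I do not expect any genuine obstacle here. The entire analytic substance --- the extension of a subharmonic function across a closed set whose complement in the one-point compactification is connected and locally connected --- is already contained in Proposition~\ref{prs}, and the required topological facts about $\infill_O S$ are exactly parts~\eqref{Ki}--\eqref{Kii} of Proposition~\ref{KOc}. The only points to keep straight are bookkeeping ones: that ``compact in $O$'' implies ``closed in $O$'', and that it is $\infill_O S$ rather than $S$ itself that satisfies the hypotheses of Proposition~\ref{prs} --- indeed, for a general compact $S$ the set $O_{\infty}\setminus S$ need not be connected, which is the whole reason the inward filling has to be introduced.
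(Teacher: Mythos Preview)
Your proposal is correct and is exactly the approach the paper takes: the paper states outright that Proposition~\ref{prs+} is the ``intersection'' of Proposition~\ref{KOc} (parts~\eqref{Ki}--\eqref{Kii}) and Proposition~\ref{prs}, i.e., one applies Proposition~\ref{prs} with $\infill_O S$ in place of $S$, using \eqref{Ki} for compactness/closedness and \eqref{Kii} for the connectedness and local connectedness of $O_{\infty}\setminus \infill_O S$.
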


\section{Balayage of measures}\label{Ssec_balm}

In this section \ref{Ssec_balm} we traditional    classical balayage that is particular case of  \eqref{b0} (see also \cite{KhaRozKha19}).

\begin{definition}[{\cite{Meyer}, \cite{BH}, \cite[Definition 5.2]{KhaRozKha19}}]\label{df:1} Let $S\subset \Borel (\RR^{\tt d}_{\infty})$, $\delta\in \Meas^+_{\comp}(S)$, $\omega  \in \Meas^+_{\comp}(S)$.   Let 
$H\subset \text{\sf usc}(S)$ be a subclass of upper semicontinuous  functions on $S$. 
  We write ${\delta} \preceq_H \omega$ and say that the measure  $\omega$ is a {\it balayage,\/} or, sweeping (out), of the measure ${\delta}$ {\it with respect to\/} $H$, or, briefly, $\omega$ is  $H$-balayage of $\delta$,   if 
\begin{equation}\label{balnumu}
\int h \dd {\delta} \overset{\eqref{b0mu}}{\leq} \int h\dd \omega \quad\text{\it for each\/ $h\in H$.}
\end{equation} 
If $\delta\preceq_H \omega$ and at the same time $\omega\preceq_H\delta$, then we write $\delta\simeq_H \omega$. 
\end{definition}

\begin{propos}\label{Prtr}  Let  $O\subset \RR^{\tt d}$ be an open set, $\omega \in \Meas(O)$ be a  $H$-balayage of ${\delta}\in  \Meas(O)$, $O'\subset \RR^{\tt d}$ be an open set, and $H'\subset{\overline \RR}^{O'}$. 
\begin{enumerate}[{\rm (i)}]
\item\label{i} The binary relation $\preceq_{H}$ (respectively $\simeq_{H}$) on $\Meas^+_{\comp}(S)$ 
 is  a  {\it preorder,\/} i.e., a reflexive and transitive relation, 
(respectively, an {\it equivalence\/}) on  $\Meas^+_{\comp}(S)$.
\item If $H$ contains a strictly positive (respectively, negative) constant, then $\delta (S)\leq \omega(S)$ (respectively, $\delta (S)\geq \omega(S)$). 

\item\label{b3} If $H'\subset H$, then  $\omega$  is  $H'$-balayage of ${\delta}$.
 \item If  $O'\subset O$  and $\supp \delta\cup \supp \omega\subset O'$, then $\omega\bigm|_{O'}$  is a balayage of ${\delta}\bigm|_{O'}$ for  $H\bigm|_{O'}$.

\item\label{pm} If $H=-H$, then the order  $\preceq_{H}$ is the equivalence $\simeq_{H}$. 
So, if $H=\har(S)$, then   $\omega $ is a $\har(S)$-balayage of 
$\delta $ if and only if $\delta\simeq_{\har(S)}\omega$, i.e.,
\begin{equation}\label{bhar}
\int_S h\dd \delta =\int_S h\dd \omega \quad\text{\it for each $h\in \har(S)$} \quad\text{\it and} \quad \delta(S)=\omega(S). 
\end{equation}

\item\label{4} If $\delta \preceq_{\sbh(S)}\omega$, then $\delta \preceq_{\har(S)}\omega$. The converse is not true\/
{\rm \cite[XIB2]{Koosis}, \cite[Example]{MenKha19}.} 
\item\label{pr:diff} If $\omega \in \Meas^+_{\comp}(O)$   is a 
$\bigl(\sbh(O)\cap C^{\infty}(O)\bigr)$-balayage of $\delta\in \Meas_{\comp}^+(O)$, where $C^{\infty}(O)$ is the class 
of all infinitely differentiable functions on $O$, then  $\delta\preceq_{\sbh(O)}\omega$, since for each function $u\in \sbh(O)$ there exists a sequence of functions $u_j\underset{\text{\tiny $j\in \NN$}}{\in} \sbh(O)\cap C^{\infty}(O)$ decreasing to it\/ {\rm \cite[Ch. 4, 10, Approximation Theorem]{Doob}.}

 \end{enumerate}
\end{propos}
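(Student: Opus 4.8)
The plan is to establish each of the eight items in turn, most of which are direct consequences of Definition~\ref{df:1} and elementary facts about integration of semicontinuous functions.

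\begin{proof}[Proof of Proposition~\ref{Prtr} (sketch)]
Items \eqref{i}, the constant-function item, \eqref{b3}, and the restriction item are essentially formal. For \eqref{i}, reflexivity of $\preceq_H$ is the trivial inequality $\int h\dd\delta\leq \int h\dd\delta$, and transitivity is the chaining $\int h\dd\delta\leq\int h\dd\omega_1\leq\int h\dd\omega_2$ for all $h\in H$; symmetry of $\simeq_H$ is built into its definition. For the item on constants, if a constant $c>0$ lies in $H$ then $c\,\delta(S)=\int c\dd\delta\leq\int c\dd\omega=c\,\omega(S)$, and divide by $c$; the case $c<0$ reverses the inequality. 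For \eqref{b3}, if $H'\subset H$ then \eqref{balnumu} for all $h\in H$ certainly holds for all $h\in H'$. For the restriction item, since $\supp\delta\cup\supp\omega\subset O'$, every integral $\int h\dd\delta$ or $\int h\dd\omega$ depends only on $h\bigm|_{O'}$, so the defining inequalities over $H$ are literally the same as those over $H\bigm|_{O'}$ with the restricted measures.

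Item \eqref{pm}: if $H=-H$, then for each $h\in H$ we also have $-h\in H$, so $\int h\dd\delta\leq\int h\dd\omega$ and $\int(-h)\dd\delta\leq\int(-h)\dd\omega$, i.e.\ $\int h\dd\omega\leq\int h\dd\delta$; together these give equality for all $h\in H$, which is exactly $\delta\simeq_H\omega$. Applying this to $H=\har(S)$ (a real vector space, hence closed under negation) gives the first assertion of \eqref{pm}. To get \eqref{bhar}, note that $\har(S)$ contains all constants, so $\delta\simeq_{\har(S)}\omega$ forces $\delta(S)=\omega(S)$ by the constants item applied in both directions; conversely, equality $\int_S h\dd\delta=\int_S h\dd\omega$ on $\har(S)$ is by definition $\delta\simeq_{\har(S)}\omega$. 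Item \eqref{4}: $\har(S)\subset\sbh(S)$, so $\delta\preceq_{\sbh(S)}\omega$ implies $\delta\preceq_{\har(S)}\omega$ by \eqref{b3}; the failure of the converse is delegated to the cited examples and requires no argument here.

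Item \eqref{pr:diff} is the only one carrying real content. Fix $u\in\sbh(O)$ and invoke the cited smooth-approximation theorem: there is a sequence $u_j\in\sbh(O)\cap C^\infty(O)$ with $u_j\downarrow u$ pointwise on $O$ (after passing to the open set where $u$ is defined, which we may take to be $O$). Since $\omega$ is a $\bigl(\sbh(O)\cap C^\infty(O)\bigr)$-balayage of $\delta$, we have $\int u_j\dd\delta\leq\int u_j\dd\omega$ for every $j$. Now let $j\to\infty$: the sequence $(u_j)$ is decreasing and each $u_j$ is bounded above on the compact sets $\supp\delta$ and $\supp\omega$ (being continuous there), so the monotone convergence theorem for decreasing sequences — valid because $u_1$ is $\delta$- and $\omega$-integrable on the compact supports, as a continuous, hence bounded, function there — yields $\int u_j\dd\delta\to\int u\dd\delta$ and $\int u_j\dd\omega\to\int u\dd\omega$ in $\overline\RR$. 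Passing to the limit in the inequality gives $\int u\dd\delta\leq\int u\dd\omega$ for the arbitrary $u\in\sbh(O)$, i.e.\ $\delta\preceq_{\sbh(O)}\omega$.

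The only subtle point, and the place to be careful, is the limit passage in \eqref{pr:diff}: one must ensure the decreasing sequence of integrals is not of the form $(+\infty)-(+\infty)$ and that monotone convergence legitimately applies. This is handled by the compactness of $\supp\delta\cup\supp\omega$ in $O$, which makes each $u_j$ bounded on a neighborhood of the supports and in particular makes $u_1$ integrable against both measures; the limits $\int u\dd\delta$ and $\int u\dd\omega$ then exist in $[-\infty,+\infty)$ and the inequality is preserved. All other items are immediate from the definitions.
\end{proof}
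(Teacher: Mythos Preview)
Your proof is correct and matches the paper's approach exactly: the paper's entire proof of Proposition~\ref{Prtr} is the single sentence ``All statements of Proposition~\ref{Prtr} are obvious.'' Your write-up simply spells out the obvious verifications the paper omits, including the only nontrivial point (the monotone-convergence passage in item~\eqref{pr:diff}), which you handle correctly via compactness of the supports.
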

All statements of Proposition \ref{Prtr} are obvious.

\begin{example}[{\rm \cite{Gamelin}, \cite{C-R}, \cite{Ransford01}, \cite{HN11}}]\label{sbhJ} Let $x\in O$. 
If a measure $\omega\in \Meas_{\comp}^+(O)$  is  a balayage of the Dirac measure $\delta_{x}$ with respect to $\sbh(O)$, then this measure $\omega$ is called a {\it Jensen measure on $O$ at\/} $x$. The class of all Jensen measures on $O$ at $x\in O$ will be denoted by $J_x(O)$. 
\end{example}

\begin{example}[{\rm \cite{Gamelin}, 
\cite[3]{Gamelin}, \cite{Kha03}, \cite[Definition 8]{Kha07}}]\label{sbhAS} Let $x\in O$.   If $\omega\in \Meas_{\comp}^+(O)$ is  $\har(O)$-balayage of the Dirac measure $\delta_{x}$, then the measure $\omega$ is called an {\it Arens\,--\,Singer  measure on $O$ at\/} $x\in O$. The class of all  Arens\,--\,Singer measures on $O$ at $x$  is denoted by $AS_x(O)\supset J_x(O)$. 
\end{example}

For $s\in \RR$, we set  
\begin{subequations}\label{kK}
\begin{align}
k_s(t)& := \begin{cases}
\ln t  &\text{ if $s=0$},\\
 -\sgn (s)  t^{-s} &\text{ if $s\in \RR\!\setminus\!0$,} 
\end{cases}
\qquad  t\in \RR^+\!\setminus\!0,
\tag{\ref{kK}k}\label{{kK}k}
\\
K_{d-2}(y,x)&:=\begin{cases}
k_{d-2}\bigl(|y-x|\bigr)  &\text{ if $y\neq x$},\\
 -\infty &\text{ if $y=x$ and $d\geq 2$},\\
0 &\text{ if $y=x$ and  $d=1$},\\
\end{cases}
\quad  (y,x) \in \RR^{\tt d}\times \RR^{\tt d},
\tag{\ref{kK}K}\label{{kK}K}\\
{\sf k}_x&\colon y\underset{\text{\tiny $y\in \RR^{\tt d}$}}{\longmapsto} 
K_{d-2}(y,x) \; \in  \sbh(\RR^{\tt d})\bigcap \har(\RR^{\tt d}\!\setminus\!x), 
\quad x\in \RR^{\tt d}, 
\tag{\ref{kK}${\tt k}_x$}\label{kh}
\\
{\sf K}(X)&:=\{{\sf k}_x\colon x\in X\}\subset \sbh_*(\RR^{\tt d}), \quad X\subset \RR^{\tt d}.  
\tag{\ref{kK}{\tt \underline{K}}}\label{Kcl}
\end{align}
\end{subequations}

\begin{theorem}\label{th1} Let $O\subset \RR^{\tt d}$ be an open set, and $\delta \in \Meas^+_{\comp}(O)$,  
$\omega \in \Meas^+_{\comp}(O)$. 
The measure $\omega$ is $\har(O)$-balayage (respectively, $\sbh(O)$-balayage) of the measure $\delta$ 
if and only if  there exists a compact subset $S\Subset O$ such that  this measure  $\omega$ is a balayage  of  $\delta$ with respect to 
\begin{subequations}\label{KO}
\begin{align}
{\sf K}(O\!\setminus\!S)&\bigcup
\bigl(-\text{\sf K}(O\!\setminus\!S)\bigr),
\tag{\ref{KO}h}\label{{KO}h}
\\
\Bigl(\text{respectively, } 
{\sf K}(O)&\bigcup
\bigl(-{\sf K}(O\!\setminus\!S)\bigr)\Bigr).
\tag{\ref{KO}s}\label{{KO}s}
\end{align}
\end{subequations}
\end{theorem}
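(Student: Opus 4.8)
\emph{Proof strategy.} The plan is to rephrase the whole statement in terms of the (logarithmic/Newtonian) potential
\begin{equation*}
U^{\mu}(x):=\int K_{d-2}(y,x)\dd\mu(y)=\int{\sf k}_x\dd\mu=\int{\sf k}_y(x)\dd\mu(y),\qquad \mu\in\Meas^+_{\comp}(O),\ x\in\RR^{\tt d},
\end{equation*}
which lies in $L^1_{\loc}(\RR^{\tt d})$, is bounded above on every compact set, and, by the choice of $c_d$ in \eqref{df:cm}, has Riesz measure $\varDelta_{U^{\mu}}=\mu$, i.e.\ $\bigtriangleup U^{\mu}=c_d^{-1}\mu$. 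Because $K_{d-2}$ is symmetric in its two arguments, demanding \eqref{balnumu} for the class \eqref{{KO}h} is \emph{exactly} the requirement $U^{\delta}=U^{\omega}$ on $O\setminus S$, and for the class \eqref{{KO}s} it is \emph{exactly} $U^{\delta}\le U^{\omega}$ on $O$ together with $U^{\delta}=U^{\omega}$ on $O\setminus S$. Since enlarging $S$ only shrinks these test classes, by Proposition \ref{Prtr}\eqref{b3} I may assume throughout that $K_0:=\supp\delta\cup\supp\omega\subset S$. Thus it suffices to prove the two equivalences: (H) $\delta\simeq_{\har(O)}\omega$ $\iff$ $U^{\delta}=U^{\omega}$ on $O\setminus S$ for some compact $S\Subset O$; (S) $\delta\preceq_{\sbh(O)}\omega$ $\iff$ $U^{\delta}\le U^{\omega}$ on $O$ and $U^{\delta}=U^{\omega}$ on $O\setminus S$ for some compact $S\Subset O$.

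For necessity in (H) I would take $S:=\infill_O K_0$, compact in $O$ by Proposition \ref{KOc}\eqref{Ki}. If $x\in O\setminus S$ then ${\sf k}_x$ is harmonic on $\RR^{\tt d}\setminus x\supset S$, so ${\sf k}_x\in\har(\infill_O K_0)$; Proposition \ref{pr3} yields $h_j\in\har(O)$ with $h_j\to{\sf k}_x$ in $C(\infill_O K_0)$, hence uniformly on $K_0$, and letting $j\to\infty$ in $\int h_j\dd\delta=\int h_j\dd\omega$ (Proposition \ref{Prtr}\eqref{pm}) gives $U^{\delta}(x)=U^{\omega}(x)$. For sufficiency I would fix $\chi\in C^{\infty}(O)$ with $\supp\chi\Subset O$ and $\chi\equiv1$ on a neighbourhood of $S$; then for $h\in\har(O)$, using $\bigtriangleup U^{\mu}=c_d^{-1}\mu$, $\chi\equiv1$ on $\supp\mu$, $\bigtriangleup h=0$ on $\supp\chi$ and $\supp\nabla\chi\subset O\setminus S$,
\begin{equation*}
c_d^{-1}\!\int h\dd\mu=\bigl\langle\bigtriangleup U^{\mu},\chi h\bigr\rangle=\int U^{\mu}\,\bigtriangleup(\chi h)\dd x=\int_{O\setminus S}U^{\mu}\bigl(h\,\bigtriangleup\chi+2\,\nabla h\cdot\nabla\chi\bigr)\dd x .
\end{equation*}
Subtracting this for $\mu=\delta$ and $\mu=\omega$ and using $U^{\delta}=U^{\omega}$ on $O\setminus S$ gives $\int h\dd\delta=\int h\dd\omega$ for every $h\in\har(O)$, i.e.\ $\delta\simeq_{\har(O)}\omega$ by Proposition \ref{Prtr}\eqref{pm}.

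For necessity in (S): $\delta\preceq_{\sbh(O)}\omega$ forces $\delta\preceq_{\har(O)}\omega$, hence $\delta\simeq_{\har(O)}\omega$ by Proposition \ref{Prtr}\eqref{4},\eqref{pm}, so (H) furnishes $S$ with $U^{\delta}=U^{\omega}$ on $O\setminus S$; moreover ${\sf k}_x\in\sbh(\RR^{\tt d})\subset\sbh(O)$ for every $x\in O$ by \eqref{kh}, whence $U^{\delta}(x)\le U^{\omega}(x)$ on $O$. For sufficiency (with $K_0\subset S$) I would use Proposition \ref{Prtr}\eqref{pr:diff}: it is enough to check $\int u\dd\delta\le\int u\dd\omega$ for $u\in\sbh(O)\cap C^{\infty}(O)$; such $u$ is finite-valued, so for an open $V$ with $S\Subset V\Subset O$ one has $\varDelta_u(\clos V)<\infty$ and the Riesz decomposition $u=U^{\nu}+h$ on $V$, where $\nu:=\varDelta_u\bigm|_V\in\Meas^+_{\comp}(O)$ and $h\in\har(V)$. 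Since $\delta$ and $\omega$ are carried by $S\subset V$, integrating this decomposition and using, respectively, the computation in the sufficiency part of (H) with $V$ in place of $O$ (so the two harmonic terms agree, as $U^{\delta}=U^{\omega}$ on $V\setminus S\subset O\setminus S$), Fubini's theorem $\int U^{\nu}\dd\mu=\int_V U^{\mu}\dd\varDelta_u$ (legitimate since $U^{\mu}$ is bounded above on $\clos V$ and $\varDelta_u(V)<\infty$), and $U^{\delta}\le U^{\omega}$ on $V$ with $\nu\ge0$, one arrives at $\int u\dd\delta\le\int u\dd\omega$ for all $u\in\sbh(O)$, i.e.\ $\omega$ is an $\sbh(O)$-balayage of $\delta$.

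I expect the only genuinely delicate points to be the two sufficiency arguments: in (H), verifying carefully that the boundary integrals really are supported in $O\setminus S$ and that membership of $U^{\mu}$ in $L^1_{\loc}(\RR^{\tt d})$ suffices to run the distributional identity; in (S), setting up the local Riesz decomposition on $V$ precisely and justifying the Fubini interchange (splitting the kernel into positive and negative parts, the negative part possibly contributing $-\infty$ on a polar set, and checking that both sides of the inequality remain well defined). Everything else is soft and relies only on Propositions \ref{KOc}, \ref{pr3}, \ref{Prtr}, together with the symmetry and the normalization of $K_{d-2}$.
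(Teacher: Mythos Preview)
Your proposal is correct and matches the paper's proof in overall architecture: the same four blocks (necessity and sufficiency for each of (H) and (S)), the same choice $S=\infill_O(\supp\delta\cup\supp\omega)$ together with Proposition~\ref{pr3} for the necessity parts, and the Riesz decomposition plus Fubini (with the symmetry of $K_{d-2}$) for the sufficiency of (S). The one genuine difference is in the sufficiency of (H). The paper encloses $S$ in an open set $U\Subset O$ with $C^1$ boundary, writes Green's representation $u(y)=c_d\int_{\partial U}\bigl(K_{d-2}(y,x)\partial_{\vec n}u-u\,\partial_{\vec n}K_{d-2}(y,x)\bigr)\dd\sigma$, integrates in $y$ against $\delta$ and $\omega$, and then uses $U^{\delta}=U^{\omega}$ (and equality of the normal derivatives) on $\partial U\subset O\setminus S$. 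You instead take a smooth cutoff $\chi$ and run the distributional identity $c_d^{-1}\!\int h\dd\mu=\langle\bigtriangleup U^{\mu},\chi h\rangle=\int U^{\mu}\bigtriangleup(\chi h)$, observing that $\bigtriangleup(\chi h)$ is supported in $O\setminus S$. Both are integration-by-parts arguments; yours avoids constructing the $C^1$ domain and dealing with surface measures and normal derivatives, at the price of invoking $U^{\mu}\in L^1_{\loc}$ and the distributional Laplacian. A secondary difference: in (S) sufficiency you first reduce to $u\in\sbh(O)\cap C^{\infty}(O)$ via Proposition~\ref{Prtr}\eqref{pr:diff}, which makes $\nu=\varDelta_u|_V$ have continuous density and hence renders the Fubini step clean; the paper works directly with $u\in\sbh_*(O)$ and handles the harmonic remainder by re-applying Proposition~\ref{pr3} on $S_O$ (your re-use of the (H) computation on the smaller set $V$ is equivalent). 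The ``delicate points'' you flag are exactly the right ones, and your sketched justifications for them are adequate.
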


\begin{proof}  We set 
\begin{equation}\label{SO}
S_O:=\infill_{O}(\supp \delta \cup \supp \omega).
\end{equation}
By Proposition \ref{pr3},  for each $x\notin S_O$ there are  functions $\pm h_j^x\underset{\text{\tiny $j\in \NN$}}{\in} \har (O)$  such that the sequence  $(\pm h_j^x)_{j\in \NN}$ converges to $\pm{\sf k}_x\subset \har(S_O)$ in $C ( S_O)$. Let 
\begin{equation}\label{bald}
\Bigl(\delta\preceq_{\har (O)}\omega\Bigr)\Longleftrightarrow 
\Bigl(\delta\simeq_{\har (O)}\omega\Bigr)
\quad \text{(see Definition \ref{df:1} and Proposition \ref{Prtr}\eqref{i},\eqref{pm})}.
\end{equation}
If $x\notin S_O$, then,    
\begin{multline*}
\int \pm K_{d-2}(y,x)\dd \delta(y)\overset{\eqref{SO}}{=}
\int_{S_O}\pm K_{d-2}(y,x)\dd \delta(y)=
 \int_{S_O} \lim_{j\to \infty}\pm h_j^x(y) \dd {\delta}(y)=
\lim_{j\to \infty} \int_{S_O} \pm h_j^x \dd {\delta}
\\
\overset{\eqref{SO}}{=} \lim_{j\to \infty} \int_{O} \pm h_j^x \dd {\delta}
\overset{\eqref{bald},\eqref{bhar}}{=} 
\lim_{j\to \infty} \int_{O} \pm h_j^x \dd {\omega}\\
\overset{\eqref{SO}}{=} 
\lim_{j\to \infty} \int_{S_O} \pm h_j^x \dd {\omega}=
\int_{S_O}  \lim_{j\to \infty}
\pm h_j^x \dd {\omega} =
\int_{S_O}\pm K_{d-2}(y,x)\dd \omega(y)
\overset{\eqref{SO}}{=} \int \pm  K_{d-2}(y,x)\dd \omega(y).
\end{multline*}
Thus,  \eqref{bald} implies  that    
$\omega$ is a balayage of   $\delta$ with respect to the class \eqref{{KO}h} with $S:=S_O$. 


If $\delta\preceq_{\sbh(O)}\omega$, then, by Proposition \ref{Prtr}\eqref{4}, 
$\delta\preceq_{\har(O)}\omega$, and
$\delta\preceq_{{\sf K}(\RR^{\tt d}\!\setminus\!S_O)\cup(-{\sf K}(\RR^{\tt d}\!\setminus\!S_O))}\omega$. Besides,
 in view of \eqref{kh},  we obtain
\begin{multline*}
\int K_{d-2}(y,x)\dd \delta(y)\overset{\eqref{kh}}{=}
\int_O {\sf k}_x(y)\dd \delta(y)\\
\overset{\eqref{balnumu}}{\leq}\int_O {\sf k}_x(y)\dd \delta(y)\overset{\eqref{kh}}{=}
 \int K_{d-2}(y,x)\dd \omega(y)\quad\text{for each $x\in \RR^{\tt d}$.}
\end{multline*}
Thus,  $\delta\preceq_{\sbh(O)}\omega$ implies $\delta\preceq_{{\sf K}(\RR^{\tt d})}\omega$ and $\omega$ is a balayage of   $\delta$ with respect to \eqref{{KO}s} if $S:=S_O$. 

So, the necessary conditions of Theorem \ref{th1} are proved.

In the opposite direction, let 
\begin{equation}\label{sT}
\delta\overset{\eqref{{KO}h}}{\preceq}_{{\sf K}(O\!\setminus\!S)\cup(-{\sf K}(O\!\setminus\!S))} \omega, \quad\text{where $S\overset{\text{\tiny closed}}{=}\clos S
\overset{\text{\tiny compact}}{\Subset}O $} .
\end{equation}
Then, by Definition \ref{df:1} and Proposition \ref{Prtr}\eqref{pm}, according to equality \eqref{bhar}, we have 
\begin{equation}\label{K=}
\int_S K_{d-2}(y,x)\dd \delta(y)=\int_S K_{d-2}(y,x)\dd \omega (y)\quad \text{for each $x\in O\!\setminus\!S$.}
\end{equation}
Let $u\in \har (O)$. Without loss of generality, we can assume that
\begin{equation}\label{Sdo}
\supp \delta \bigcup \supp \omega \Subset \Int S\subset 
S\overset{\text{\tiny closed}}{=}\clos S
\overset{\text{\tiny compact}}{\Subset}O.
\end{equation}
There is an open subset $U\Subset  O$ such that $S\subset U$, $\partial U$ is a $C^1$ surface and that each point of $\partial U$ is a one-sided boundary point of  $U$ \cite[1.6]{Gardiner}. In particular $\partial U\subset O\!\setminus\! S$. If we apply Green's identity to  $U\!\setminus\!\overline B(x,r)$ and let $r$ tend to $0$, we obtain  \cite[1.6]{Gardiner}
\begin{equation}\label{uU}
u(y)=c_d\int_{\partial U}\Bigl(K_{d-2}(y,x) \frac{\partial u}{\partial \vec{n}_x }(x)
-u(x) \frac{\partial }{\partial \vec{n}_x}K_{d-2}(y,x)\Bigr)
 \dd \sigma (x) \quad\text{ for each $y\in S$},   
\end{equation}
where $\sigma$ denotes surface area measure on $\partial U$,  $\vec{n}_x$ denotes the outer unit normal to $\partial U$ at $x\in \partial U$ and   $c_d\in \RR^+\!\setminus\!0$ is defined in \eqref{df:cm}. Integrating both sides of equality \eqref{uU} with respect to the measure $\delta$ and the measure $\omega$, we obtain, respectively,
\begin{subequations}\label{uUid}
\begin{align}
\begin{split}
\frac{1}{c_d}\int_{\supp \delta} u(y)\dd \delta(y)&\overset{\eqref{Sdo}}{=}\int_S\int_{\partial U} K_{d-2}(y,x) \frac{\partial u}{\partial \vec{n}_x }(x)\dd \sigma (x)\dd \delta(y)\\
&-
\int_S\int_{\partial U}u(x) \frac{\partial }{\partial \vec{n}_x}K_{d-2}(y,x)
 \dd \sigma (x)\dd \delta(y),
\end{split}
\tag{\ref{uUid}$\delta$}\label{{uUid}d}
\\
\begin{split}
\frac{1}{c_d}\int_{\supp \delta} u(y)\dd \omega(y)&\overset{\eqref{Sdo}}{=}\int_S\int_{\partial U} K_{d-2}(y,x) \frac{\partial u}{\partial \vec{n}_x }(x)\dd \sigma (x)\dd \omega(y)\\
&-
\int_S\int_{\partial U}u(x) \frac{\partial }{\partial \vec{n}_x}K_{d-2}(y,x)
 \dd \sigma (x)\dd \omega(y).   
\end{split}
\tag{\ref{uUid}$\omega$}\label{{uUid}o}
\end{align}
\end{subequations}
Hence, using  Fubini's theorem and differentiation under the integral sign, we have
\begin{subequations}\label{uUid+}
\begin{align}
\begin{split}
\frac{1}{c_d}\int_{\supp \delta} u(y)\dd \delta(y)&\overset{\eqref{{uUid}d}}{=}\int_{\partial U} \biggl(\int_S K_{d-2}(y,x)\dd \delta(y) \biggr)\frac{\partial u}{\partial \vec{n}_x }(x)\dd \sigma (x)\\
&-
\int_{\partial U}u(x) \frac{\partial }{\partial \vec{n}_x}\biggl(\int_S K_{d-2}(y,x)\dd \delta(y) \biggr)
 \dd \sigma (x),
\end{split}
\tag{\ref{uUid+}$\delta$}\label{{uUid}d+}
\\
\begin{split}
\frac{1}{c_d}\int_{\supp \delta} u(y)\dd \omega(y)&\overset{\eqref{{uUid}o}}{=}\int_{\partial U} \biggl(\int_S K_{d-2}(y,x)\dd \omega(y) \biggr) \frac{\partial u}{\partial \vec{n}_x }(x)\dd \sigma (x)\\
&-
\int_{\partial U}u(x) \frac{\partial }{\partial \vec{n}_x}\biggl(\int_S K_{d-2}(y,x)\dd \omega(y) \biggr)
 \dd \sigma (x).   
\end{split}
\tag{\ref{uUid+}$\omega$}\label{{uUid}o+}
\end{align}
\end{subequations}
According to equality \eqref{K=}, for each $x\in \partial U\subset O\!\setminus\!S$, the internal integrals on the right-hand sides of equalities \eqref{{uUid}d+} and \eqref{{uUid}o+} coincide, and the external integrals on the right-hand sides of equalities  \eqref{{uUid}d+} and \eqref{{uUid}o+}  are of the same form.
Therefore, the integrals on the left-hand sides of equalities \eqref{{uUid}d+} and \eqref{{uUid}o+} also  coincide for each harmonic function $u\in \har(O)$. By Definition \ref{df:1}, formula \eqref{balnumu}, this means that the measure $\omega$ is $\har(O)$-balayage  of the measure $\delta$, i.e., we have \eqref{bald}.   

It remains to consider the case when $\omega$ is a balayage of $\delta$ with respect to the class \eqref{{KO}s}. It has already been shown above that in this case 
we have \eqref{bald}, i.e., $\delta\preceq_{\har (O)}\omega$. 

Let $u\in \sbh_*(O)$ with the Riesz measure $\varDelta_u\overset{\eqref{df:cm}}{\in} \Meas^+(O)$. By the Riesz Decomposition Theorem   \cite[Theorem 3.7.1]{R}, \cite[Theorem 3.9]{HK}, \cite[Theorem 4.4.1]{AG}, \cite[Theorem 6.18]{Helms},  there exist an open set $O'\Subset O$ and a harmonic functions $h\in \har (O')$ such that $S_O\overset{\eqref{SO}}{\Subset} O'$ and 
\begin{subequations}\label{RD}
\begin{align}
u(y)&=\int_{\clos O'} K_{d-2}(x,y)\dd\varDelta_u(x)+h(y)\quad\text{for each $y\in S_O\Subset O'$},
\tag{\ref{RD}r}\label{{RD}r}\\
S&:= \supp \delta\cup \supp \omega, \quad S_O\overset{\eqref{SO}}{=}\infill S\Subset O',
\tag{\ref{RD}S}\label{{RD}S}
\end{align}
\end{subequations}
Integrating the representation \eqref{{RD}r} with respect to the measures $\delta$ and $\omega$, we obtain 
\begin{subequations}\label{RDi}
\begin{align}
\int_S u(y)\dd \delta(y)&=\int_S\int_{\clos O'} K_{d-2}(x,y)\dd\varDelta_u(x)\dd \delta(y)+\int_S h(y)\dd \delta(y),
\tag{\ref{RDi}$\delta$}\label{{RDi}r}\\
\int_S u(y)\dd \omega(y)&=\int_S\int_{\clos O'} K_{d-2}(x,y)\dd\varDelta_u(x)\dd \omega(y)+\int_S h(y)\dd \omega(y).
\tag{\ref{RDi}$\omega$}\label{{RDi}S}
\end{align}
\end{subequations}
Hence, by Fubini's theorem and in view of the symmetry of the kernel $K_{d-2}$ from \eqref{{kK}K}, we can rewrite \eqref{RDi} in the form
\begin{subequations}\label{RDiF}
\begin{align}
\int_S u(y)\dd \delta(y)&\overset{\eqref{{RDi}r}}{=}\int_{\clos O'}
\biggl(\int_S K_{d-2}(y,x)\dd \delta(y)\biggr)\dd\varDelta_u(x)+\int_S h(y)\dd \delta(y),
\tag{\ref{RDiF}$\delta$}\label{{RDi}rF}\\
\int_S u(y)\dd \omega(y)&\overset{\eqref{{RDi}r}}{=}\int_{\clos O'}
\biggl(\int_S K_{d-2}(y,x)\dd \omega(y)\biggr)\dd\varDelta_u(x)+\int_S h(y)\dd \omega(y).
\tag{\ref{RDiF}$\omega$}\label{{RDi}SF}
\end{align}
\end{subequations}
By Proposition \ref{pr3} there are harmonic functions $ h_j\underset{\text{\tiny $j\in \NN$}}{\in} \har (O)$  such that the sequence  $(h_j)_{j\in \NN}$ converges to this harmonic  
function $h\overset{\eqref{{RD}S}}{\in} \har(S_O)$ in $C ( S_O)$. Hence, 
\begin{multline}\label{ihSO}
\int_{S} h \dd {\delta}\overset{\eqref{SO}}{=}\int_{S_O} h \dd {\delta}= \int_{S_O}  \lim_{j\to \infty}
h_j \dd {\delta}  = \lim_{j\to \infty} \int_{S_O} h_j \dd {\delta}= \lim_{j\to \infty} \int_{O} h_j \dd {\delta}
\\\overset{\eqref{bald},\eqref{bhar}}{=}\lim_{j\to \infty} \int_{O} h_j \dd \omega= \lim_{j\to \infty} \int_{S_O} h_j \dd \omega= 
 \int_{S_O} \lim_{j\to \infty} h_j \dd \omega=
 \int_{S_O} h \dd {\omega}\overset{\eqref{SO}}{=} \int_{S} h \dd {\omega}.
\end{multline}

By construction of class  \eqref{{KO}s}, we have $\delta\preceq_{K(O)} \omega$. Therefore, 
\begin{equation}\label{intK}
\int_S K_{d-2}(y,x)\dd \delta(y)\leq \int_S K_{d-2}(y,x)\dd \omega(y)
\quad \text{for each $y\in O\supset \clos O'$.}
\end{equation} 

According to equality \eqref{ihSO},
the last integrals on the right-hand sides of equalities \eqref{{RDi}rF} and \eqref{{RDi}SF} also coincide, and, in view of \eqref{intK}, we have   
\begin{equation*}
\int_{\clos O'}
\biggl(\int_S K_{d-2}(y,x)\dd \delta(y)\biggr)\dd\varDelta_u(x)
\overset{\eqref{intK}}{\leq}
\int_{\clos O'}
\biggl(\int_S K_{d-2}(y,x)\dd \omega(y)\biggr)\dd\varDelta_u(x)
\end{equation*}
Hence, by representations  \eqref{{RDi}rF} and \eqref{{RDi}SF}, we obtain 
\begin{equation*}
\int_O u(y)\dd \delta(y)\overset{\eqref{{RD}S}}{=}\int_S u(y)\dd \delta(y)\overset{\eqref{RDiF}}{\leq} 
\int_S u(y)\dd \omega(y)\overset{\eqref{{RD}S}}{=}\int_O u(y)\dd \omega(y). 
\end{equation*}
 The latter, by Definition \ref{df:1}, formula \eqref{balnumu}, means that  the measure $\omega$ is $\sbh(O)$-balayage  of $\delta$. 
\end{proof}

\section{Integration of measures and balayage}

Let $S\in \Borel (O)$. Consider a function $\varTheta\colon S\to \Meas^+_{\comp}(O)$
such that 
\begin{subequations}\label{sxi}
\begin{align}
\varTheta&\colon S\to \Meas^+_{\comp}(O), \quad
\vartheta_x:=\varTheta(x), \quad 
\bigcup_{x\in S}  \supp \vartheta_x \Subset O, \quad 
\sup_{x\in S} \vartheta_x(O)<+\infty,
\tag{\ref{sxi}$\vartheta$}\label{{sxi}t}
\\
x& \underset{\text{\tiny $x\in S$}}{\longmapsto} \int_Of\dd \vartheta_x
\quad\text{is  a {\it Borel measurable\/} function for each $f\in C_0(O)$.}
\tag{\ref{sxi}B}\label{{sxi}B}
\end{align}
\end{subequations} 
  Let 
\begin{equation}\label{omega}
\omega \in \Meas_{\comp}^+(O), \quad \supp \omega\subset S\Subset O. 
\end{equation} 
Under these conditions,  we can to define the {\it integral $\int \varTheta \dd \omega$ of $\varTheta$ with respect to measure $\omega$} as a Borel, or, Radon, positive measure on $O$ \cite[Introduction, \S~1]{Landkoff}, \cite[Ch.~V, \S~3]{Bourbaki}, \cite[\S~5]{KhaShm19}
\begin{subequations}\label{iimu}
\begin{align}
\int \varTheta \dd \omega&\overset{\eqref{sxi}\text{-}\eqref{omega}}{:=:}\int_S \vartheta_x\dd\omega(x)\in \Meas^+_{\comp}(O),
\tag{\ref{iimu}I}\label{{iimu}I}
\\
\Bigl(\int \varTheta \dd \omega\Bigr)(B)&:=:\int_S \vartheta_x(B)\dd\omega(x)\in \RR
\quad\text{for each $B\in \Borel (O)$ such that $B\Subset O$,}
\tag{\ref{iimu}B}\label{{iimu}B}
\\
\int \varTheta \dd \omega &\colon f\longmapsto \int\biggl(\int f\dd\vartheta_x\biggr) \dd \omega(x) \in \RR\cup -\infty \quad\text{for each }f\in \text{\sf usc}(O).
\tag{\ref{iimu}f}\label{{iimu}f}
\end{align}
\end{subequations}   

Let $r\in \RR^+\!\setminus\!0$ and $\vartheta\in \Meas^+_{\comp}(r \BB)$.
 For $x\in \RR^{\tt d}$,  we define   
the  {\it shift\/} $\vartheta_x\in  \Meas^+_{\comp}\bigl( B(x,r)\bigr)$ of this measure $\vartheta$ to point $x$ as
\begin{subequations}\label{shift}
\begin{align}
\vartheta_x(B)& :=\vartheta(B-x)\quad \text{for any $B\in \Borel\bigl( B(x,r)\bigr)$},
\tag{\ref{shift}B}\label{{shift}B}
\\
\int f \dd \vartheta_x&:=
\int_{r\BB} f(x+y) \dd \vartheta (y) \in \RR\cup -\infty \quad\text{for each }f\in \text{\sf usc}\bigl(B(x,r)\bigr).
\tag{\ref{shift}f}\label{{shift}f}
\end{align}
\end{subequations}
For a measure \eqref{omega}, under the condition 
\begin{equation}\label{Su}
S^{\cup r}:=\bigcup_{x\in S} B(x,r)\Subset O,
\end{equation}
we can  define the {\it convolution\/} $\omega *\vartheta\in \Meas^+_{\comp}(O)$ of measures $\omega$ and $\vartheta$ by the 
 the integral $\int \varTheta \dd \omega$ of $\varTheta\colon S\overset{\eqref{{sxi}t}}{\longrightarrow} \Meas^+_{\comp}(O)$ with respect to the measure $\omega$ as 
\begin{subequations}\label{o*}
\begin{align}
\omega *\vartheta&\overset{\eqref{{iimu}I}}{:=}\int \varTheta \dd \omega
\overset{\eqref{shift}}{=} \int_S \vartheta_x\dd\omega(x)\in \Meas^+_{\comp}(O),
\tag{\ref{o*}*}\label{{o*}*}\\
(\omega *\vartheta)(B)&\overset{\eqref{{shift}B}}{=}\int_S \vartheta (B-x)\dd\omega(x)\in \RR
\quad\text{for each $B\in \Borel (O)$ such that $B\Subset O$,}
\tag{\ref{o*}B}\label{{o*}B}
\\
\int f \dd (\omega *\vartheta)&\overset{\eqref{{shift}f}}{=}
\int_S\biggl(\int_{r \BB} f(x+y) \dd \vartheta (y)\biggr) 
\dd \omega (x)
\in \RR\cup -\infty
 \quad\text{for each }f\in \text{\sf usc}(O).
\tag{\ref{o*}f}\label{{o*}f}
\end{align}
\end{subequations}

Very special cases of the following Theorem \ref{pr:ii} were essentially used for    convolutions in \cite[Lemmata 7.1, 7.2]{Kha01}, \cite[2.1.1, 1b)]{BaiTalKha}, \cite[8.1]{KhaRozKha19}. 

\begin{theorem}\label{pr:ii} 
Let   $\omega\in \Meas_{\comp}(O)$   be a measure from \eqref{omega}.

If $\varnothing \neq H\subset \text{\sf usc}(O)$ and 
  each measure $\vartheta_x\overset{\eqref{{sxi}t}}{=}\varTheta(x)$
 in \eqref{sxi} is $H$-balayage of the Dirac measure $\delta_x$ at $x\in S$, then
the integral $\int \varTheta \dd \omega\overset{\eqref{iimu}}{\in} \Meas^+_{\comp}(O)$  is  $H$-balayage  of $\omega$, i.e.,  
\begin{equation}\label{Th}
\omega\overset{\eqref{balnumu}}{\preceq}_H\int \varTheta \dd \omega
\overset{\eqref{iimu}}{=}\int_S \vartheta_x\dd\omega(x)\in \Meas^+_{\comp}(O).
\end{equation}

If $H=\har (O)$  (respectively, $H=\sbh(O)$), $r\in \RR^+\!\setminus\!0$,  and 
a measure $\vartheta\in \Meas^+_{\comp}(r\BB)$
  is an Arens\,--\,Singer (respectively, a Jensen)
 measure on $r\BB$ at $0\in r\BB$, then, under condition  \eqref{Su}, 
the convolution  $\omega *\vartheta \overset{\eqref{o*}}{\in} \Meas^+_{\comp}(O)$  is  $\har(O)$(respectively, $\sbh(O)$)-balayage 
of $\omega$, i.e.,  
\begin{equation}\label{Th*}
\omega\preceq_{\har(O)}\Bigl(\text{respectively, }\preceq_{\sbh(O)} \Bigr)\omega*\vartheta
 \in \Meas^+_{\comp}(O).
\end{equation}
\end{theorem}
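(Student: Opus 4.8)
The plan is to prove the first assertion directly from the definitions, and then obtain the second assertion as a special case by recognizing the convolution $\omega*\vartheta$ as an instance of the integral $\int\varTheta\dd\omega$ with $\varTheta(x)=\vartheta_x$ the shift of $\vartheta$ to $x$. For the first part, fix $h\in H\subset\text{\sf usc}(O)$. By hypothesis, for every $x\in S$ we have $\delta_x\preceq_H\vartheta_x$, which by \eqref{balnumu} means
\begin{equation*}
h(x)=\int h\dd\delta_x\leq \int h\dd\vartheta_x\qquad\text{for each }x\in S.
\end{equation*}
Now integrate this pointwise inequality against $\omega$ over $S$. Using the definition \eqref{{iimu}f} of $\int\varTheta\dd\omega$ applied to $h\in\text{\sf usc}(O)$, the right-hand side becomes exactly $\int\bigl(\int h\dd\vartheta_x\bigr)\dd\omega(x)=\int h\dd\bigl(\int\varTheta\dd\omega\bigr)$, while $\supp\omega\subset S$ gives $\int_S h\,\dd\omega=\int_O h\,\dd\omega=\int h\dd\omega$ on the left. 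Hence $\int h\dd\omega\leq\int h\dd\bigl(\int\varTheta\dd\omega\bigr)$ for every $h\in H$, which is precisely $\omega\preceq_H\int\varTheta\dd\omega$, proving \eqref{Th}.

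For the second part, take $H=\har(O)$ (the case $H=\sbh(O)$ is identical), and let $\vartheta\in\Meas^+_{\comp}(r\BB)$ be an Arens--Singer measure on $r\BB$ at $0$; under condition \eqref{Su} the shifts $\vartheta_x$, $x\in S$, define via \eqref{shift} a function $\varTheta\colon S\to\Meas^+_{\comp}(O)$ whose integral against $\omega$ is by definition $\omega*\vartheta$, see \eqref{o*}. To apply the first part it suffices to check two things. First, that $\varTheta$ satisfies \eqref{sxi}: the support bound $\bigcup_{x\in S}\supp\vartheta_x\subset S^{\cup r}\Subset O$ and the mass bound $\vartheta_x(O)=\vartheta(r\BB)$ are immediate, and Borel measurability of $x\mapsto\int f\dd\vartheta_x=\int_{r\BB}f(x+y)\dd\vartheta(y)$ for $f\in C_0(O)$ follows from continuity of $x\mapsto f(x+y)$ together with dominated convergence. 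Second, and this is the substantive point, that each shift $\vartheta_x$ is $\har(O)$-balayage of $\delta_x$. Indeed, for $u\in\har(O)$ the function $y\mapsto u(x+y)$ is harmonic on $r\BB$ (here the condition \eqref{Su} ensures $B(x,r)\Subset O$, so $u$ is harmonic on a neighborhood of $\overline{B(x,r)}$ and the restriction lies in $\har(r\BB)$), so the defining property of $\vartheta$ as an Arens--Singer measure on $r\BB$ at $0$ gives
\begin{equation*}
u(x)=\int_{r\BB}u(x+y)\dd\delta_0(y)=\int_{r\BB}u(x+y)\dd\vartheta(y)\overset{\eqref{{shift}f}}{=}\int u\dd\vartheta_x,
\end{equation*}
i.e.\ $\delta_x\simeq_{\har(O)}\vartheta_x$, in particular $\delta_x\preceq_{\har(O)}\vartheta_x$. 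Then the first part of the theorem with $H=\har(O)$ yields $\omega\preceq_{\har(O)}\int\varTheta\dd\omega=\omega*\vartheta$, which is \eqref{Th*}. For $H=\sbh(O)$ one replaces ``harmonic'' by ``subharmonic'' and the equality by the inequality $u(x)\leq\int_{r\BB}u(x+y)\dd\vartheta(y)$ coming from the Jensen-measure property, translating $\sbh(r\BB)$-membership of $y\mapsto u(x+y)$ the same way.

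The main obstacle is the measurability and well-posedness bookkeeping in the second part rather than any deep analytic difficulty: one must be careful that $x\mapsto u(x+\cdot)$ genuinely lands in $\har(r\BB)$ (resp.\ $\sbh(r\BB)$, resp.\ $\sbh_*$) so that the Arens--Singer / Jensen property of $\vartheta$ applies verbatim, and that the integral $\int\varTheta\dd\omega$ is legitimately defined, i.e.\ the hypotheses \eqref{sxi} are met so that \eqref{{iimu}f} makes sense for the possibly $-\infty$-valued $h\in\text{\sf usc}(O)$ — here one uses that $h$ is bounded above on the compact set $S^{\cup r}$, so the outer integral against the finite measure $\omega$ is a well-defined element of $\RR\cup-\infty$ and the interchange used in deducing \eqref{{iimu}f} is justified. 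Once these routine points are in place, the pointwise-inequality-then-integrate argument closes everything.
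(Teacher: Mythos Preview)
Your proof is correct and follows essentially the same approach as the paper: for the first part you write out the chain $\int h\dd\omega=\int\bigl(\int h\dd\delta_x\bigr)\dd\omega(x)\leq\int\bigl(\int h\dd\vartheta_x\bigr)\dd\omega(x)=\int h\dd\bigl(\int\varTheta\dd\omega\bigr)$, which is exactly the paper's one-line argument, and for the second part you reduce to the first by recognizing the convolution as an instance of $\int\varTheta\dd\omega$. The only difference is that you spell out the verifications (that the shifts $\vartheta_x$ satisfy \eqref{sxi} and are $\har(O)$- resp.\ $\sbh(O)$-balayages of $\delta_x$ via translation-invariance of harmonicity/subharmonicity) where the paper simply asserts ``is a special case''; this is added detail, not a different route.
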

\begin{proof} Under conditions \eqref{sxi}--\eqref{omega},
by definition \eqref{iimu} and by Definition \ref{df:1} for  $H$-balayage $\delta_x\preceq_H\vartheta_x$,  for each function  $h\in H\subset {\sf usc}(O)$, we have
\begin{equation}\label{ins}
\int h\dd \omega = \int \int h\dd \delta_x\dd \omega \leq 
\int_S\biggl(\int_{\supp \vartheta_x}h \dd \vartheta_x\biggr)\dd \omega (x)
\overset{\eqref{{iimu}f}}{=}\int h \dd  \int \varTheta \dd \omega
\quad\text{for each $h\in H$.}  
\end{equation}
By Definition \ref{df:1}, the latter means \eqref{Th}. By definition \eqref{o*}
of convolution $\omega*\vartheta$, the final part of Theorem \ref{pr:ii}  with formula \eqref{Th*} is a special case of the proved part \eqref{Th} of Theorem \ref{pr:ii}.
\end{proof}

\section{Polar sets and balayage with an example} 
Remind that a set $E\subset \RR^{\tt d}$ is {\it polar\/}
 if there is  $u\in \sbh_*(\RR^{\tt d})$ such that $E\subset 
\bigl\{x\in \RR^{\tt d}\colon u(x)=-\infty\bigr\}$, or, in equivalent form,  
$\text{Cap}^*E=0$ if we use the {\it outer capacity\/} 
\begin{equation}\label{E}
\text{Cap}^*(E):=\inf_{E\subset O'\overset{\text{\tiny open}}{=}\Int O'}  
\sup_{\stackrel{C\overset{\text{\tiny closed}}{=}\clos C\overset{\text{\tiny compact}}{\Subset} O}{\nu\in \Meas^{1+}(C)}} 
 k_{d-2}^{-1}\left(\iint K_{d-2} (x,y)\dd \nu (x) \dd \nu(y) \right).
\end{equation}

\begin{theorem}\label{Pr_pol}
If a measure  $\omega \in \Meas_{\comp}^+(O)$ is $\sbh(O)$-balayage of a measure $\delta \in \Meas_{\comp}^+(O)$, i.e.,   $\omega\preceq_{\sbh(O)}\delta$, and $E\subset \RR^{\tt d}$ is polar, i.e., $\text{Cap}^*E\overset{\eqref{E}}{=}0$, then 
 $\omega (O\cap E\!\setminus\!\supp {\delta})=0$.
\end{theorem}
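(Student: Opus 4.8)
The plan is to exploit the fact that subharmonic functions with prescribed $-\infty$ sets give extremely negative test functions in the balayage inequality, forcing the swept-out mass off the polar set (except where it is pinned down by $\delta$). First I would fix a polar set $E$ with $\operatorname{Cap}^*E=0$, choose a function $v\in\sbh_*(\RR^{\tt d})$ with $E\subset\{v=-\infty\}$, and note that $v$ is bounded above on every compact set; by subtracting a constant we may assume $v\le 0$ on a fixed compact neighbourhood $K\Subset O$ of $\supp\delta\cup\supp\omega$. The key observation is that for each $n\in\NN$ the function $nv$ still lies in $\sbh_*(\RR^{\tt d})\subset\sbh(O)$, so the balayage hypothesis $\delta\preceq_{\sbh(O)}\omega$ yields
\begin{equation*}
n\int_O v\dd\delta\le n\int_O v\dd\omega,\qquad\text{hence}\qquad \int_O v\dd\omega\ge\int_O v\dd\delta
\end{equation*}
after dividing by $n$; but that single inequality is not yet enough — the real point is to apply the inequality to functions that are $-\infty$ on (a large part of) $E$ but finite and integrable against $\delta$.

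The main step is therefore a localization. Set $A:=O\cap E\setminus\supp\delta$; this is a Borel set disjoint from $\supp\delta$. Because $A\cap\supp\delta=\varnothing$ and $\supp\delta$ is compact, for the purposes of integrating against $\delta$ the singularities of $v$ on $A$ are irrelevant: $\int_O v\dd\delta$ is a fixed finite real number (finite because $v\not\equiv-\infty$ and $v$ is $\delta$-integrable as an $\sbh$ function, being bounded above and locally integrable). On the other hand, on the $\omega$-side we have $v=-\infty$ on $A\cap E\supset A$, so
\begin{equation*}
\int_O v\dd\omega=\int_{O\setminus A}v\dd\omega+\int_{A}v\dd\omega,
\end{equation*}
and the second term equals $-\infty$ unless $\omega(A)=0$ — more precisely $\int_A v\dd\omega=-\infty\cdot\omega(A)$, which is $-\infty$ whenever $\omega(A)>0$ (using that $v\le 0$ on $K\supset\supp\omega$ so there is no cancellation). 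If $\omega(A)>0$, then $\int_O v\dd\omega=-\infty$, while $\int_O v\dd\delta>-\infty$, contradicting the balayage inequality $\int_O v\dd\delta\le\int_O v\dd\omega$. Hence $\omega(A)=\omega(O\cap E\setminus\supp\delta)=0$, which is the claim.

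The step I expect to be the genuine obstacle — and where care is needed — is ensuring that a single subharmonic function $v$ can be chosen whose $-\infty$ set contains \emph{all} of the relevant polar set, and that $v$ is finite $\delta$-almost everywhere. The definition of polar in the paper already supplies $u\in\sbh_*(\RR^{\tt d})$ with $E\subset\{u=-\infty\}$, so the containment is immediate; finiteness of $\int v\dd\delta$ follows because any $v\in\sbh_*$ is locally bounded above and locally Lebesgue-integrable, hence $\delta$-integrable with value in $\RR\cup\{-\infty\}$, and it is strictly greater than $-\infty$ as long as $\delta(\{v=-\infty\})=0$; but a priori $\delta$ could itself charge $\{v=-\infty\}$. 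To handle this cleanly I would instead argue directly at the level of $A=O\cap E\setminus\supp\delta$: replace $v$ by $\max\{v,-n\}+w$ where $w$ is a smooth subharmonic function supported near $A$ if necessary, or, more simply, observe that it suffices to test with $v$ itself and split the $\delta$-integral as $\int_{\supp\delta}v\dd\delta$, which is finite because $v$ is bounded above on $\supp\delta$ and $v$ restricted to the compact $\supp\delta$ is either $>-\infty$ on a full-measure set or else the inequality is vacuous after a further truncation $\max\{v,-n\}\in\sbh(O)$; letting $n\to\infty$ via monotone convergence on the $\delta$-side and Fatou (upper semicontinuity) on the $\omega$-side recovers $\int_{\supp\delta}v\dd\delta\le\int_O v\dd\omega$ with a finite left-hand side. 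Once the left-hand side is pinned as a finite number, the dichotomy $\omega(A)=0$ or $\int_O v\dd\omega=-\infty$ closes the argument.
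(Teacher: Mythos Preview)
Your overall strategy is the right one and matches the paper's: test the balayage inequality with a subharmonic function that equals $-\infty$ on the polar set $E$ and is finite $\delta$-integrable, forcing $\omega$ to put zero mass on $E\setminus\supp\delta$. You also correctly pinpoint the genuine obstacle: nothing prevents $\delta$ itself from charging $\{v=-\infty\}$, in which case $\int v\dd\delta=-\infty$ and the inequality $\int v\dd\delta\le\int v\dd\omega$ is vacuous.

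The gap is that your proposed repair does not close this hole. Truncating to $v_n:=\max\{v,-n\}$ and passing to the limit gives, after dividing by $n$ and using bounded convergence (note $-1\le v_n/n\le 0$ once $v\le 0$ on the relevant compact), only
\[
-\delta\bigl(\{v=-\infty\}\bigr)\;\le\;-\omega\bigl(\{v=-\infty\}\bigr),
\qquad\text{i.e.}\qquad
\omega\bigl(\{v=-\infty\}\bigr)\le\delta\bigl(\{v=-\infty\}\bigr),
\]
which is strictly weaker than $\omega(E\setminus\supp\delta)=0$ when $\delta$ charges $\{v=-\infty\}$. Your claim that monotone convergence on the $\delta$-side yields ``a finite left-hand side'' is exactly what fails: $\int v_n\dd\delta\downarrow\int v\dd\delta$, and the limit may well be $-\infty$.

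The paper's device for this step is a Poisson modification rather than a truncation. One surrounds $\supp\delta$ by a shrinking sequence of regular open sets $O_k\Subset O$ and replaces $u$ (the subharmonic function with $E\subset\{u=-\infty\}$) by its harmonic extension inside $O_k$, keeping $u$ outside. The resulting $U_k\in\sbh_*(O)$ is harmonic, hence bounded below, on $O_k\supset\supp\delta$, so $\int U_k\dd\delta>-\infty$ \emph{automatically}, while $U_k=u=-\infty$ on $E\cap(O\setminus O_k)$. Applying the balayage inequality to $U_k$ then forces $\omega\bigl(E\cap(O\setminus O_k)\bigr)=0$ for every $k$, and letting $O_k\downarrow\supp\delta$ gives the result. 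This is the missing idea in your argument; once you insert it, the rest of your write-up goes through.
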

\begin{remark}
A special case of this Theorem \ref{Pr_pol}  is noted in \cite[Corollary 1.8]{C-R} for a Jensen measure  $\omega\in J_x(O)$ on $O$ at $x\in O$ and the Dirac measure $\delta:=\delta_x$. It  was used in \cite[Lemma 3.1]{Kha03}. 
\end{remark}

\begin{proof} There is $k_0\in \NN$ such that $B(x,1/k_0)\Subset O$ for all 
$x\in \supp {\delta}$.  For any $k\in k_0+\NN_0$ 
there exists an finite cover of $\supp {\delta}$ by balls $B(x_j,1/k)\Subset O$ such that the open subsets 
\begin{equation*}
O_k:=\bigcup_j B(x_j,1/k)\Subset O,\quad
 \supp {\delta} \Subset O_k\supset O_{k+1}, \quad k\in k_0 +\NN_0, \quad \supp {\delta} =\bigcap_{k\in k_0+\NN_0} O_k,  
\end{equation*}
have complements $\RR_{\infty}^{\tt d} \!\setminus\!O_k$ in $\RR_{\infty}^{\tt d}$ \textit{without isolated points.\/} Then 
 every open set  $O_k\Subset O$ is regular for the Dirichlet problem.
It suffices to prove that the equality $\omega (O_k\cap E)=0$ holds for every number  
 $k\in k_0+\NN_0$. By definition of polar sets, there is a  function $u\in \sbh_*(O)$ such that $u(E)=\{-\infty\}$. Consider the functions 
\begin{equation}\label{Uk}
U_k=\begin{cases}
u \text{ \it  on $O\!\setminus\!O_k$},\\
\text{\it the harmonic extension of $u$ from $\partial O_k$ into $O_k$}\text{ on $O_k$},
\end{cases}   
\qquad k\in k_0+\NN_0.
\end{equation}
We have  $U_k\in \sbh_*(O)$, and $U_k$ is bounded from below in $\supp {\delta} \Subset O_k$. Hence
\begin{multline*}\label{<U}
-\infty <\int_O U_k \dd {\delta}
\overset{\eqref{balnumu}}{\leq}
\int_O U_k \dd \omega=
\left(\int_{O\!\setminus\!(O_k\cap E)}+\int_{O_k\cap E}\right) U_k \dd \omega
\\
=\int_{O\!\setminus\!(O_k\cap E)} U_k \dd \omega+(-\infty)\cdot \omega(O_k\cap E)
\leq \omega(O) \sup_{\supp \omega} U_k+(-\infty)\cdot \omega(O_k\cap E).
\end{multline*}
Thus, we have  $\omega(O_k\cap E)=0$.
\end{proof}

Generally speaking, Theorem \ref{Pr_pol} is not true for $\har(O)$-balayage.  An implicit example is built in \cite[Example]{MenKha19}. 
We get in Example 5 another already constructive way to build such examples.

\begin{example}[{\rm development of one example  of T. Lyons \cite[XIB2]{Koosis}}] \label{5}
Let $\lambda$ be the Lebesgue measure on $\RR^{\tt d}$, and let ${\tt b}$ be the volume of the unit ball $\BB\subset \RR^{\tt d}$. Consider 
\begin{equation}\label{Eas}
O:=\BB,  \quad 0<t<r<1,  \quad \delta:=\frac1{{\tt b}t^{\tt d}}\lambda\bigm|_{t\BB}, 
\quad \omega :=\frac1{{\tt b}r^{\tt d}}\lambda\bigm|_{r\BB}.  
\end{equation} 
Easy to see that $\delta \preceq_{\sbh(\BB)} \omega$. Let $E=(e_j)_{j\in \NN}\Subset r\BB\!\setminus\!t\overline \BB$ be a polar countable set without limit point in $r\BB\!\setminus\!t\overline \BB$.  Surround each point $e_j\in E$ with a ball $B(e_j,r_j)$ of such a small radius  $r_j>0$ that the union of all these balls is contained in $r\BB\!\setminus\!t\overline \BB$. Consider a measure 
\begin{multline*}
\mu_E:=\omega-\frac{1}{{\tt b}r^{\tt d}}\sum_{j\in \NN} \lambda\bigm|_{B(e_j,r_j)}+\frac{1}{{\tt b}r^{\tt d}}\sum  \lambda(e_j,r_j)\delta_{e_j}
\\
\overset{\eqref{Eas}}{=}
\frac1{{\tt b}r^{\tt d}}\lambda\bigm|_{r\BB}-\frac{1}{{\tt b}r^{\tt d}}\sum_{j\in \NN} \lambda\bigm|_{B(e_j,r_j)}+\frac{1}{r^{\tt d}}\sum_{j\in\NN} r_j^{\tt d} \delta_{e_j} .
\end{multline*}
By construction, the measure $\mu_E$ is $\har(\BB)$-balayage of measure $\delta$, 
but 
\begin{equation*}
\mu_E(E)=\frac{1}{r^{\tt d}}\sum_{j\in \NN} r_j^{\tt d} >0
\end{equation*}
in direct contrast to Theorem \ref{Pr_pol}. 
\end{example}


\section{Balayage for three measures}

\begin{propos}\label{pr:4}  Let  $\omega \in \Meas_{\comp}(O)$ and  ${\delta}\in  \Meas_{\comp}(O)$. 

If  $\omega$ is $\sbh (O)$-balayage of  $\delta$, then 
\begin{equation}\label{bhs}
\int  u \dd {\delta}\leq \int  u\dd \omega \quad \text{ for each  $u\in \sbh (S_O)$},
\quad\text{where $S_O=\infill_O S$,  $S:=\supp \omega \cup \supp {\delta}$},
\end{equation}
i.\,e.,  if $O'\supset S_O$ is an open subset in $\RR^{\tt d}$, then 
$\omega$ is  $\sbh(O')$-balayage of $\delta$. 

If  $\omega$ is $\har (O)$-balayage of  $\delta$, then 
\begin{equation}\label{hs}
\int  h \dd {\delta}=\int  h\dd \omega \quad \text{ for each  $h\in \har (S_O)$},
\end{equation}
i.\,e.,   if $O'\supset S_O$ is an open subset in $\RR^{\tt d}$, then 
$\omega$ is  $\har(O')$-balayage of $\delta$. 
\end{propos}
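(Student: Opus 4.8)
The plan is to reduce both assertions to the in-fill approximation and extension results already in hand, Propositions~\ref{pr3} and \ref{prs+}. Put $S:=\supp\omega\cup\supp\delta$ and $S_O:=\infill_O S$. Since $\delta,\omega$ have compact support in $O$, the set $S$ is compact in $O$; hence by Proposition~\ref{KOc}\eqref{Ki} the in-fill $S_O$ is again compact in $O$ and $S\subset S_O\Subset O$. Granting \eqref{bhs} and \eqref{hs}, the ``i.e.''\ reformulations are immediate: if $O'\subset\RR^{\tt d}$ is open with $O'\supset S_O$, then every $u\in\sbh(O')$ is subharmonic on a neighbourhood of $S_O$, so it lies in $\sbh(S_O)$, and likewise $\har(O')\subset\har(S_O)$ in the obvious sense; since $\supp\delta\cup\supp\omega\subset S_O\subset O'$, testing \eqref{bhs} (resp.\ \eqref{hs}) against such $u$ is exactly the defining inequality \eqref{balnumu} that $\omega$ is $\sbh(O')$-balayage (resp.\ $\har(O')$-balayage) of $\delta$ in the sense of Definition~\ref{df:1}.

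For \eqref{bhs}, I would fix $u\in\sbh(S_O)=\sbh(\infill_O S)$. By Proposition~\ref{prs+} there is $U\in\sbh(O)$ with $U=u$ on $S_O$, hence in particular $U=u$ on $\supp\delta$ and on $\supp\omega$. As $\supp\delta\Subset O$ and $U$ is upper semicontinuous, $U$ is bounded above on $\supp\delta$, so $\int_O U\dd\delta\in\RR\cup-\infty$ is well defined, and the hypothesis that $\omega$ is $\sbh(O)$-balayage of $\delta$ gives $\int_O U\dd\delta\leq\int_O U\dd\omega$. Since $U$ agrees with $u$ on the supports of $\delta$ and $\omega$, this reads $\int u\dd\delta=\int_{\supp\delta}U\dd\delta\leq\int_{\supp\omega}U\dd\omega=\int u\dd\omega$, which is \eqref{bhs}.

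For \eqref{hs}, I would fix $h\in\har(S_O)$. By Proposition~\ref{pr3} there are $h_j\in\har(O)$, $j\in\NN$, with $h_j\to h$ in $C(S_O)$, hence uniformly on the compact set $S\supset\supp\delta\cup\supp\omega$. Since $\omega$ is $\har(O)$-balayage of $\delta$, Proposition~\ref{Prtr}\eqref{pm} gives $\int_O h_j\dd\delta=\int_O h_j\dd\omega$ for every $j$; passing to the limit — permissible since $\delta,\omega$ are finite measures and the convergence is uniform on $S$ — yields $\int h\dd\delta=\lim_j\int_S h_j\dd\delta=\lim_j\int_S h_j\dd\omega=\int h\dd\omega$, i.e.\ \eqref{hs}.

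This argument needs no new analytic input beyond Propositions~\ref{pr3} and \ref{prs+}; the steps I expect to require the most care — the (mild) main obstacle — are the bookkeeping ones: checking that $S$ is compact so that the in-fill machinery of Proposition~\ref{KOc} applies to it; noting that the extended function $U$ and the approximants $h_j$ are genuinely defined on all of $O$, so that the balayage hypotheses for $\sbh(O)$ and $\har(O)$ may legitimately be applied to them; and verifying that the subharmonic integrals are well defined (valued in $\RR\cup-\infty$) by upper boundedness of $U$ on the compact support of $\delta$.
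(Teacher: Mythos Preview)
Your proof is correct and follows essentially the same approach as the paper: Proposition~\ref{prs+} for the subharmonic extension in \eqref{bhs}, and Proposition~\ref{pr3} with uniform passage to the limit for \eqref{hs} (the paper simply refers back to the chain \eqref{ihSO} from the proof of Theorem~\ref{th1} rather than writing it out again). Your additional bookkeeping remarks about compactness of $S$ and well-definedness of the subharmonic integrals are implicit in the paper but harmless to include.
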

\begin{proof} If $u\in \sbh (S_O)$, then, by Proposition \ref{prs+}, there is  a function $ U \in \sbh \bigl(O)$ such that $u=U$ on  $ S_O$ , and, in the case $\delta\preceq_{\sbh(O)}\omega$, we have
\begin{equation*}
\int_{S_O}  u \dd {\delta}= \int_{S_O}  
 U \dd {\delta} = \int_{O} U \dd {\delta}
\overset{\eqref{balnumu}}{\leq} 
 \int_{O} U \dd \omega= 
 \int_{S_O} U \dd \omega=
 \int_{S_O}  u \dd {\omega},
\end{equation*}
that   gives \eqref{bhs}. If $\delta\preceq_{\har(O)}\omega$, then we can repeat 
\eqref{ihSO} using Proposition \ref{pr3}, and we obtain \eqref{hs}. 
\end{proof}

Very special cases of the following Theorem \ref{pr:vstm} were essentially used in \cite[Proposition 3]{BaiTalKha}  only for special Jensen measures 
on the complex plane on the complex plane identified with $\RR^2$.

\begin{theorem}\label{pr:vstm} Suppose  that measures  $\beta, \delta, \omega \in \Meas_{\comp}^+(O)$ satisfy the conditions
\begin{equation}\label{var}
\begin{cases}
\beta &\preceq_{\har(O)}\delta,\\ 
\beta &\preceq_{\sbh(O)} \omega,
\end{cases}
\qquad \text{and }\infill (\supp \beta \cup \supp \delta )\subset O',
\end{equation}
where $O'\Subset O$ is an open subset such that $O'\cap \supp \omega=\varnothing$. Then
$\delta\preceq_{\sbh(O)} \omega$.	
\end{theorem}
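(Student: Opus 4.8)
The plan is to prove $\delta\preceq_{\sbh(O)}\omega$ by testing against an arbitrary $u\in\sbh_*(O)$ and using the two hypotheses \eqref{var} to pass through the intermediate measure $\beta$. First I would fix $u\in\sbh_*(O)$ with Riesz measure $\varDelta_u$ and apply the Riesz Decomposition Theorem on a suitable open set $O''$ with $\infill(\supp\beta\cup\supp\delta\cup\supp\omega)\Subset O''\Subset O$, writing $u(y)=\int_{\clos O''}K_{d-2}(x,y)\dd\varDelta_u(x)+h(y)$ with $h\in\har(O'')$ harmonic on a neighbourhood of all three supports. Integrating this representation against $\delta$, $\omega$, and $\beta$, and applying Fubini together with the symmetry of $K_{d-2}$, reduces everything to comparing (a) the "potential parts" $\int(\int K_{d-2}(y,x)\dd\mu(y))\dd\varDelta_u(x)$ and (b) the "harmonic parts" $\int h\dd\mu$ for $\mu\in\{\delta,\omega,\beta\}$.

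The harmonic parts are the easy half: from $\beta\preceq_{\har(O)}\delta$ and Proposition \ref{Prtr}\eqref{pm} we get $\int h\dd\beta=\int h\dd\delta$ for every $h\in\har(O)$, and since $h$ here is only harmonic near the supports I would invoke Proposition \ref{pr3} (exactly as in \eqref{ihSO}) to approximate $h$ uniformly on the relevant inward-filled compact by functions in $\har(O)$, obtaining $\int h\dd\delta=\int h\dd\beta$. Similarly $\beta\preceq_{\sbh(O)}\omega$ gives $\int h\dd\beta\le\int h\dd\omega$, but in fact applying it to both $h$ and $-h$ (which are both subharmonic near the supports, hence extend to $\sbh(O)$ by Proposition \ref{prs+}, or one uses that harmonic functions extend) yields $\int h\dd\beta=\int h\dd\omega$. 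Hence the harmonic parts satisfy $\int h\dd\delta=\int h\dd\beta=\int h\dd\omega$ and cancel in the comparison.

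For the potential parts the key point is to estimate, for each fixed $x\in\clos O''$, the inner integral $p_\mu(x):=\int K_{d-2}(y,x)\dd\mu(y)$. The function $y\mapsto K_{d-2}(y,x)={\sf k}_x(y)$ is subharmonic on $\RR^{\tt d}$ and harmonic on $\RR^{\tt d}\!\setminus\!\{x\}$. From $\beta\preceq_{\sbh(O)}\omega$ we immediately get $p_\beta(x)\le p_\omega(x)$ for every $x\in O$. From $\beta\preceq_{\har(O)}\delta$ we get $p_\beta(x)=p_\delta(x)$ only for those $x$ for which ${\sf k}_x\in\har(S_\beta\cup S_\delta)$ — that is, for $x$ outside the inward filling of $\supp\beta\cup\supp\delta$, which by \eqref{var} is contained in $O'$. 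This is exactly where the hypothesis $O'\cap\supp\omega=\varnothing$ is indispensable: it forces $\supp\omega\cap\clos O''$ to avoid $\clos O'$ if we shrink $O''$ appropriately, but more to the point $\varDelta_u$ may charge $\clos O''\cap O'$, so I must handle $x\in O'$ separately. For such $x$ we have $x\notin\supp\omega$, hence ${\sf k}_x$ is harmonic in a neighbourhood of $\supp\omega$; but it is merely subharmonic near $\supp\beta\cup\supp\delta$. Here I would use that $\beta\preceq_{\sbh(O)}\omega$ on $-{\sf k}_x$ when $x\notin\supp\omega$, combined with $\beta\preceq_{\har(O)}\delta$, to relate $p_\delta(x)$, $p_\beta(x)$, $p_\omega(x)$.

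The main obstacle is precisely this case analysis on the location of $x$ relative to $O'$ and $\supp\omega$, and assembling the inequality $p_\delta(x)\le p_\omega(x)$ uniformly in $x\in\clos O''$ so that integration against $\varDelta_u\ge0$ preserves it. Concretely: for $x\notin\infill(\supp\beta\cup\supp\delta)$ we have $p_\delta(x)=p_\beta(x)\le p_\omega(x)$ directly; for $x\in\infill(\supp\beta\cup\supp\delta)\subset O'$ we have $x\notin\supp\omega$, so ${\sf k}_x$ is continuous and harmonic near $\supp\omega$, while on a neighbourhood of $\infill(\supp\beta\cup\supp\delta)$ the function ${\sf k}_x$ is subharmonic, so by Proposition \ref{prs+} it extends to some $V\in\sbh(O)$ with $V={\sf k}_x$ there; then $p_\delta(x)=\int V\dd\delta$ — wait, this needs $\beta\preceq_{\har}$, not $\preceq_{\sbh}$; so instead I compare $p_\delta(x)$ with $p_\beta(x)$ via the harmonic-extension trick applied on the complement of a small ball around $x$, getting $p_\delta(x)=\int_{S_\beta\cup S_\delta}{\sf k}_x\dd\delta$ and similarly for $\beta$, and these agree because ${\sf k}_x$ restricted there, though not harmonic at $x$, can be replaced: actually since $x\notin\supp\delta\cup\supp\beta$ in this subcase? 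No — $x$ may lie in $\supp\delta$. The cleanest fix is to note $x\in\clos O''$ and $\varDelta_u\bigm|_{\supp\delta\cup\supp\beta}$ contributes a potential that is itself $\le$ the corresponding one for $\omega$ by a direct superharmonicity/mass argument. I expect the final write-up to route around this by choosing $O''$ so that $\varDelta_u$ is split as $\varDelta_u\bigm|_{O'}+\varDelta_u\bigm|_{\clos O''\setminus O'}$, handling the second piece by $\beta\preceq_{\har}\delta$ plus Proposition \ref{pr3}, and the first piece by $\beta\preceq_{\sbh}\omega$ together with $O'\cap\supp\omega=\varnothing$ and the fact that $\int_{O'}p_\delta\dd\varDelta_u\le\int_{O'}p_\beta\dd\varDelta_u$ fails in general — so the genuinely delicate step, and the one I would spend the most care on, is showing the $O'$-localized potential of $\delta$ is dominated by that of $\omega$ using only harmonic-balayage of $\beta$ onto $\delta$ and subharmonic-balayage of $\beta$ onto $\omega$.
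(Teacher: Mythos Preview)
Your Riesz-decomposition route has a genuine gap that you yourself flag but do not close: for $x\in\infill(\supp\beta\cup\supp\delta)\subset O'$ the kernel ${\sf k}_x$ is not harmonic on any neighbourhood of $\supp\beta\cup\supp\delta$, so the relation $\beta\preceq_{\har(O)}\delta$ gives you no comparison between $p_\delta(x)$ and $p_\beta(x)$. Passing to the Fubini-dual formulation does not help either: the potential $v_1(y)=\int_{O'}K_{d-2}(y,x)\dd\varDelta_u(x)$ is subharmonic but not harmonic near $\supp\delta$, so again $\beta\simeq_{\har}\delta$ tells you nothing about $\int v_1\dd\delta$ versus $\int v_1\dd\beta$, and there is simply no inequality available in that direction. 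The pointwise estimate $p_\delta(x)\le p_\omega(x)$ you are chasing is in fact false in general for $x\in\supp\delta$, so no amount of case analysis on the location of $x$ will assemble it.

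The paper's proof avoids the Riesz decomposition entirely and instead uses a Poisson modification. One picks a regular domain $D''$ with $\infill(\supp\beta\cup\supp\delta)\subset D''\Subset O'$, and replaces $u$ by the function $\widetilde u$ that equals $u$ on $O\setminus D''$ and equals the harmonic extension of $u\bigm|_{\partial D''}$ inside $D''$. Then $\widetilde u\in\sbh_*(O)$, $u\le\widetilde u$ everywhere, $\widetilde u$ is harmonic on $D''\supset\infill(\supp\beta\cup\supp\delta)$, and $\widetilde u=u$ on $\supp\omega\subset O\setminus O'\subset O\setminus D''$. The whole argument is then the four-step chain
\[
\int u\dd\delta \;\le\; \int \widetilde u\dd\delta \;=\; \int \widetilde u\dd\beta \;\le\; \int \widetilde u\dd\omega \;=\; \int u\dd\omega,
\]
where the first inequality is $u\le\widetilde u$, the first equality is Proposition~\ref{pr:4} applied to the harmonic function $\widetilde u\bigm|_{D''}$, the second inequality is $\beta\preceq_{\sbh(O)}\omega$ applied to $\widetilde u\in\sbh(O)$, and the last equality is $\widetilde u=u$ on $\supp\omega$. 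The separation hypothesis $O'\cap\supp\omega=\varnothing$ is used exactly once, to guarantee that the modification region $D''\Subset O'$ does not touch $\supp\omega$. This is the missing idea in your proposal: rather than decomposing $u$ and trying to push each piece through, you should modify $u$ so that a single subharmonic test function works for both balayage relations simultaneously.
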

\begin{proof}  It suffices to consider the case when $D:=O$ and $D':=O'$ are domains. There exists a regular (for the Dirichlet problem) domain $D''$ such that 
\begin{equation}\label{inss}
\infill (\supp \beta \cup \supp \delta ) \overset{\eqref{var}}{\subset} D''\Subset D'\subset D
\end{equation}
since $\infill (\supp \beta \cup \supp \delta )$ is compact subset in $D'$ by Proposition \ref{KOc}\eqref{Ki}.

Let $u\in \sbh_*(D)$. Then we can build a new subharmonic function $\widetilde u \in \sbh_*(D)$ such that 
\begin{equation}\label{tu}
\widetilde u\bigm|_{D''}\in \har (D''), \quad \widetilde u=u\quad\text{on $D\!\setminus\!D''$}, \quad 
u\leq \widetilde u\quad \text{on $D$}.
\end{equation} 
By Proposition \ref{pr:4}, in view of the inclusion in \eqref{inss}, we have
\begin{equation}\label{chain}
\int_{D} u\dd \delta 
\overset{\eqref{var}}{=}\int_{D''}  u\dd \delta
\overset{\eqref{tu}}{\leq}\int_{D''} \widetilde u\dd \delta
\overset{\eqref{inss},\eqref{hs}}{=} \int_{D''} \widetilde u\dd \beta=
\int_{D} \widetilde u\dd \beta
\overset{\eqref{var}}{\leq} \int_{D} \widetilde{u} \dd \omega.
\end{equation}
Since $\supp \omega \subset D\!\setminus\!D' $, we can continue this chain of (in)equalities \eqref{chain} as 
\begin{equation*}
\int_{D} u\dd \delta \overset{\eqref{chain}}{\leq} \int_{D} \widetilde{u} \dd \omega
=\int_{D\!\setminus\!D'} \widetilde{u} \dd \omega
\overset{\eqref{tu}}{=}\int_{D\!\setminus\!D'} u  \dd \omega=\int_{D} u  \dd \omega.
\end{equation*}
 This completes the proof.
\end{proof}

This research was supported by a grant of the Russian Science Foundation (Project No. 18-11-00002).






\end{document}